\newcommand{\C} {\mathbb C}
\newcommand{\mc}{\mathcal}
\newtheorem{theorem}{Theorem}
\newtheorem{lemma}[theorem]{Lemma}
\newtheorem{corollary}[theorem]{Corollary}
\newtheorem{proposition}[theorem]{Proposition}
\newtheorem*{definition}{Definition}
\newtheorem*{Example}{Example}
\begin{document}
\title{Semigroup representations in holomorphic dynamics.}
\author[1]{Carlos Cabrera}
\author[1]{Peter Makienko}
\author[2]{Peter Plaumann}

\affil[1]{Instituto de Matem\'aticas\\ Unidad Cuernavaca. UNAM}

\affil[2]{Mathematisches Institut\\ Friedrich-Alexander-Universit\"{a}t Erlangen-N\"{u}rnberg}

\maketitle

\footnotetext{This work was partially supported by PAPIIT project IN 100409.}



\begin{abstract}
We use semigroup theory to describe the group of automorphisms of some
semigroups of interest in holomorphic dynamical systems. We show, with some
examples, that representation theory of semigroups is related to usual constructions in
holomorphic dynamics. The main tool for our discussion is a theorem due to
Schreier. We extend this theorem, and our results in semigroups, to the setting 
of correspondences and holomorphic correspondences. 
\end{abstract}

\section{Introduction}
One of the motivations of this paper is to add a new entry to Sullivan's dictionary
between holomorphic dynamics and Kleinian groups. This entry consist of  the algebraic part
of a holomorphic dynamical system, that corresponds to the algebraic structure of
a Kleinian group. From this point of view, the natural object is a semigroup.
This investigation was initiated by the following question of \'{E}tienne Ghys:
\begin{itemize}
 \item Are there multiplicative characters, defined on the semigroup of
polynomials with composition, which do not arises as a multiplicative function
of the degree?
\end{itemize}
In general, multiplicative characters play an important role in  representation
theory, which is also the subject of this work.

We give a positive answer to Ghys' question, and suggest a general method 
to construct multiplicative characters on polynomials. Then, we study the automorphism
groups of several semigroups of interest in holomorphic dynamical systems. 
In the second section, we show that the semigroup of polynomials and rational 
maps  are generated by linear automorphisms and the Galois group action on $\C$.
A result, due to Hinkkanen, states that the automorphism group of entire
functions consist of the  group of continuous  inner automorphisms. 
Using Hinkkanen's Theorem,  we show that, the group of automorphisms 
of the semigroup of meromorphic functions also consists of the group of
continuous inner automorphisms. We give algebraic conditions, using sandwich
semigroups, that characterize when two given polynomials, or rational maps, are
conformally conjugated. 
The main tool  is a theorem, due to Schreier, which states that any
representation of a semigroup of maps $S$ is geometric, whenever $S$ contains
constant maps. This result is useful to study the representation space of
semigroups of maps and correspondences. Hence, this theorem remarks the
importance of considering semigroups of maps together with constants. This point
of view is also adopted in Eremenko's paper \cite{EreEnd}.

On the third section we provide several examples of semigroups representations
that appear in holomorphic dynamical systems.  Among these, there is a
connection with the deformation space of a given rational map. The
topology of the deformation space of a rational space is discussed in
\cite{Cabre.Maki.Teic}. 

On the last section, we generalize the results of Section 2, to the setting of
holomorphic correspondences. In particular, we prove a generalized Schreier
Lemma for correspondences and holomorphic correspondences. This allow us to 
characterize the Galois group $Gal(\C)$, the group of all field automorphisms of
$\C$, as the subgroup of $Bij(\C)$ that under conjugation functionally
preserves the finite holomorphic world.  

\section{Semigroups.}

A semigroup is a set $S$ together with a binary operation
which is associative. Given any set $X$, consider the semigroup  
$Map(X)$ of all maps $\phi:X\rightarrow X$, with composition as semigroup
operation. It contains the group of bijections $Bij(X)$. A subset $I\subset S$
is
called a left (respectively right) ideal, if $s i\in I$ (respectively $i s\in
I$), for all $s\in S$ and $i\in I$.

For every element $s$ in $S$, let $\tau_s$ be the left translation by $s$.
The map $s\mapsto \tau_s$ induces a representation $\phi$ from $S$ into
$Map(S)$. In fact, the same map gives a representation from $S$ into $Map(I)$ for
every left ideal $I$ in $S$. Note that $\phi$ is a faithful representation only
in the case when, for every pair of elements $g, h$ in $S$, there is $i\in I$
such that $gi\neq hi$. 

Let $X$ be an abstract set, then there is a canonical inclusion of $X$ into
$Map(X)$, sending  every $x$ in $X$ to the constant map $x$.
The image of this map is a left ideal  $\mc{I}$ in $Map(X)$, the \textit{ideal
of constants} of $X$. An element $x$ in $Map(X)$ belongs to $\mc {I}$ if, and
only if, for every $g\in Map(X)$ we have :

\begin{itemize}
 \item[i)] $g\circ x\in \mc{I}$;
 \item[ii)] $x\circ g=x$.
\end{itemize}

The ideal of constants is contained in every left ideal of $Map(X)$. In this sense,
the ideal of constants is the smallest left ideal in $Map(X)$. Given any
semigroup $S$, we can use properties (i) and (ii) to define the ideal
of constants, whenever it exists.

\begin{Example}
In general, the ideal of constants in $Map(X)$ is not prime. Assume that $X$ has at
least three points $x_1,x_2$ and $x_3$, let  $g_1$ and $g_2$ in
$Map(X)$, such that, $Image(g_1)=\{x_1,x_2\}$, $g_2(x_1)=g_2(x_2)\neq g_2(x_3)$,
then $g_2\circ g_1$ is constant but $g_2$ is not. However, if we restrict to the
space of continuous maps $C_0(X)$ in a topological space $X$ with enough
regularity, then the ideal of constants is prime. 
\end{Example}

From now on, we will consider the special case where $X$ is either the complex
plane $\C$ or the Riemann sphere $\bar{\C}$. Given two subsets $S_1$ and
$S_2$ in $Map(X)$, we denote by $\langle S_1, S_2 \rangle$ the semigroup
generated by $S_1$ and $S_2$.

\subsection{Multiplicative characters of semigroups.}

Let $Pol(\C)$ denote the semigroup  of complex polynomials 
with composition as semigroup multiplication. Let us consider the set
 $Hom(Pol(\C),\C)$ of all multiplicative characters, that is, the set of
homomorphisms $\chi$ in  satisfying $$\chi(P_1\circ
P_2)=\chi(P_1)\cdot\chi(P_2)$$  for all $P_1,P_2\in Pol(\C)$. 

The degree function $deg$, is a basic example of a multiplicative character in
$Pol(\C)$. Any multiplicative function of $deg$ induces a multiplicative
character. It was a question of \'{E}. Ghys whether there are other characters
apart from these examples.  We give a positive answer to this question and give
a description of how to construct multiplicative characters on $Pol(\C)$. To do
so, first let us recall a theorem due to Ritt,  see \cite{Ritt}.

\begin{definition}
A polynomial $P$, is called \textnormal{prime}, or
\textnormal{indecomposable},
if whenever we have $P=Q\circ R$, where $Q$ and $R$ are polynomials,  then
either $deg(Q)=1$ or $deg(R)=1$. A decomposition of $P=P_1\circ P_2 \circ ...
\circ P_n$ is called a \textnormal{prime decomposition} if, and only if, each
$P_i$ is a prime polynomial of degree at least $2$ for all $i$.
\end{definition}

Given a prime decomposition of a polynomial $P=P_1\circ ... \circ P_n$, a
\textit{Ritt transformation}, say in the $j$ place, is the substitution of the
pair $P_j\circ P_{j+1}$, in the prime decomposition of $P$, by the pair
$Q_j\circ Q_{j+1}$. Where $Q_j$ and $Q_{j+1}$ are prime polynomials satisfying
$P_j\circ P_{j+1}=Q_j\circ Q_{j+1}$. Now we can state Ritt's theorem.

\begin{theorem}[Ritt]\label{Ritt.thm}
Let $P=P_1\circ P_2\circ ... \circ P_m$ and $P=Q_1\circ ...\circ Q_n$ be two
prime decompositions of $P$, then $n=m.$ Moreover, any two given prime
decompositions of $P$ are related by a finite number of Ritt transformations.
\end{theorem}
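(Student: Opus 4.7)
The plan is to argue by induction on $\deg P$, with the base case $P$ prime being trivial. For the inductive step, given two prime decompositions $P = P_1 \circ \cdots \circ P_m = Q_1 \circ \cdots \circ Q_n$, the strategy is to modify the rightmost portion of one decomposition by a single Ritt transformation so that the innermost prime factors agree. Once $P_m = Q_n$, surjectivity of polynomial maps on $\C$ lets one cancel on the right, reducing the problem to polynomials of smaller degree, to which the induction hypothesis applies. The equality $m = n$ then emerges automatically, since each Ritt transformation preserves the number of factors, and the induction gives the same length for the reduced pair.

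The crucial ingredient is a classification of the equation $A \circ P_m = B \circ Q_n$, where $P_m$ and $Q_n$ are prime, $A = P_1 \circ \cdots \circ P_{m-1}$, and $B = Q_1 \circ \cdots \circ Q_{n-1}$. If the subfields $\C(P_m)$ and $\C(Q_n)$ of $\C(x)$ coincide, then L\"uroth's theorem forces $P_m = L \circ Q_n$ for some degree-one polynomial $L$, and absorbing $L$ into $B$ brings the innermost factors into agreement, after which induction finishes the job. If the subfields are distinct, I would consider their intersection $\C(R)$ and analyze the resulting pair $(P_{m-1} \circ P_m, Q_{n-1}' \circ Q_n)$ satisfying a nontrivial compositional identity; this is the content of Ritt's first theorem, which asserts that such identities, up to linear conjugacy, arise from two explicit families -- the power maps $z \mapsto z^d$ and the Chebyshev polynomials $T_d$. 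Once this first theorem is in hand, a single Ritt transformation applied in the $(m-1)$-th slot matches the innermost factors of the two decompositions.

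The main obstacle is the classification of compositional coincidences itself. The nontrivial identities $z^{kl} = z^k \circ z^l$ and $T_{kl} = T_k \circ T_l$ show that genuinely inequivalent prime decompositions of the same polynomial can coexist, so one must prove that these exceptional families exhaust all possibilities. This classification is traditionally carried out by analyzing the monodromy of the branched covers $\bar{\C} \to \bar{\C}$ defined by $A$ and $B$, combined with a careful examination of the ramification data forced by the equality $A \circ B = C \circ D$. That step, rather than the reduction/cancellation scheme outlined above, is the delicate content on which the whole proof rests.
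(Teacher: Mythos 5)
The paper does not actually prove this statement: Theorem \ref{Ritt.thm} is imported as a black box from Ritt's paper \cite{Ritt}, so there is no internal proof to compare against. Judged on its own terms, your outline follows the standard modern route---induction on the degree, L\"uroth's theorem applied to the subfields $\C(P_m)$ and $\C(Q_n)$ of $\C(x)$, and right-cancellation (using surjectivity) once the innermost prime factors agree---and that skeleton is reasonable.

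There are, however, genuine gaps. The most serious is that your classification of the exceptional compositional coincidences is wrong as stated: besides the Chebyshev commutation $T_k\circ T_l=T_l\circ T_k$, the second exceptional family is not the commuting power maps but the identity $z^k\circ\bigl(z^rP(z^k)\bigr)=\bigl(z^rP(z)^k\bigr)\circ z^k$ for an arbitrary polynomial $P$; the pure power-map case is only the instance $P\equiv 1$. The paper itself lists this as the third type of Ritt transformation, and without it the ``single Ritt transformation in the $(m-1)$-th slot'' you invoke does not exist in general. Second, to match innermost factors the relevant L\"uroth field is the \emph{compositum} $\C(P_m)\cdot\C(Q_n)$, whose generator is the greatest common right compositional factor; the intersection you propose gives instead a least common left multiple $S$ with $S=S_1\circ P_m=S_2\circ Q_n$, and one must still verify that $\deg S_1=\deg Q_n$ and $\deg S_2=\deg P_m$ are coprime before the classification of bidecompositions applies, and that this identity really localizes to adjacent slots of the two given decompositions. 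Finally, as you concede, the monodromy and ramification analysis you defer \emph{is} the substance of the theorem, so the proposal is an organizational frame around the hard step rather than a proof of it. (A cosmetic point: what you call ``Ritt's first theorem''---the classification of $A\circ B=C\circ D$ with coprime degrees---is conventionally his second theorem; the statement being proved here is the first.)
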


In \cite{Ritt}, Ritt showed that there are  three types of  Ritt
transformations, namely, see also \cite{BeaRitt}:

\begin{enumerate}
 \item  Substitute $P_i\circ P_{i+1}$ by $(P_i\circ A)\circ (A^{-1})\circ
P_{i+1}$, where $A$ is an affine map.
 \item Substitute  $P_i\circ P_{i+1}$ by $P_{i+1}\circ P_{i}$, when $P_i$ and
$P_{i+1}$ are Tchebychev polynomials.
\item If $P_i(z)=z^k$ and $P_{i+1}(z)=z^r P(z^k)$ for some polynomial $P$ and
natural numbers $r$ and $k$. Define $Q_{i+1}(z)=z^r(P(z))^k$, then substitute
$P_i\circ P_{i+1}$ by $Q_{i+1}\circ P_i$. 
\end{enumerate}

In particular, there are two invariants of a prime decomposition, the length of
a prime decomposition, and the set of degrees in the prime decomposition. Hence,
for every $P\in Pol(\C)$  the length of a prime decomposition of $P$ is a
well defined additive character $l(P)$.  That is, it satisfies
$$l(P_1\circ P_2)=l(P_1)+l(P_2).$$ Now, define the function $\chi$ by
$\chi(P)=e^{l(P)}$. Then, $\chi$ is a multiplicative character which is not
a multiplicative function of the degree. 

The following theorem gives a method to generate multiplicative characters in
$Pol(\C)$.

\begin{theorem}\label{th.characters} Let $\phi$ be a complex function, defined
on the set of prime polynomials, satisfying:
\begin{itemize}
 
\item[(i)] $\phi(c)=0$ for every constant $c$.
\item[(ii)] If $P_1,P_2,P_3, P_4$ are prime
polynomials with $P_1\circ P_2=P_3\circ P_4$,
then $$\phi(P_1)\cdot\phi(P_2)=\phi(P_3)\cdot\phi( P_4).$$
\end{itemize}

Then, $\phi$ generates a multiplicative character $\Phi$. Conversely, if
$\Phi$ is a multiplicative character in $Pol(\C)$, which  is not the constant
map $1$, then $\Phi$ satisfies the conditions above.

\end{theorem}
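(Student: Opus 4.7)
The plan is to prove the two directions separately. The converse is nearly immediate, so I would dispatch it first and concentrate on the forward implication, whose central ingredient is Ritt's Theorem~\ref{Ritt.thm}.

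For the converse, suppose $\Phi$ is a multiplicative character on $Pol(\C)$ that is not identically $1$, and set $\phi := \Phi|_{\text{primes}}$. Condition (ii) is then immediate from multiplicativity: if $P_1 \circ P_2 = P_3 \circ P_4$, then $\phi(P_1)\phi(P_2) = \Phi(P_1 \circ P_2) = \Phi(P_3 \circ P_4) = \phi(P_3)\phi(P_4)$. For (i), pick any $P_0$ with $\Phi(P_0) \neq 1$; for every constant $c$ we have $c \circ P_0 = c$, hence $\Phi(c) = \Phi(c)\Phi(P_0)$, and so $\Phi(c)(1 - \Phi(P_0)) = 0$ forces $\Phi(c) = 0$.

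For the forward direction, starting from $\phi$, I would define $\Phi$ on $Pol(\C)$ by cases according to degree: $\Phi(c) := 0$ for constants; $\Phi(A) := \phi(A)$ for affine $A$; and $\Phi(P) := \phi(P_1) \cdots \phi(P_n)$ when $\deg P \geq 2$, using any prime decomposition $P = P_1 \circ \cdots \circ P_n$. The main obstacle is to verify that this last definition is independent of the chosen prime decomposition. By Ritt's theorem any two prime decompositions of $P$ are related by a finite sequence of Ritt transformations, each locally substituting a block $P_i \circ P_{i+1}$ by $Q_i \circ Q_{i+1}$ with $P_i \circ P_{i+1} = Q_i \circ Q_{i+1}$. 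A brief inspection of each of the three listed types shows that all four polynomials involved remain prime (for type 1 one uses that $P_i \circ A$ is prime whenever $P_i$ is prime and $A$ is affine, since any decomposition $P_i \circ A = S \circ T$ would give the decomposition $P_i = S \circ (T \circ A^{-1})$; for type 2 this is automatic, and for type 3 it is built into Ritt's original description). Condition (ii) then gives $\phi(P_i)\phi(P_{i+1}) = \phi(Q_i)\phi(Q_{i+1})$, so the product is preserved under each Ritt move and $\Phi$ is well defined.

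It remains to check multiplicativity $\Phi(P \circ Q) = \Phi(P)\Phi(Q)$, which proceeds by cases on the degrees. If either factor is constant then $P \circ Q$ is constant and both sides vanish by (i). If both factors have degree $\geq 2$, concatenating their prime decompositions yields a prime decomposition of $P \circ Q$ and the identity is immediate. The mixed affine case reduces, via (ii) applied to $P_n \circ A = (P_n \circ A) \circ \mathrm{id}$, to the normalization $\phi(\mathrm{id}) = 1$; the alternative value $\phi(\mathrm{id}) = 0$ propagates through the definition and collapses $\Phi$ to the trivial zero character, which is still a character (and is consistent with the converse hypothesis excluding only the constant $1$).
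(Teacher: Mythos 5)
Your converse and the core of your forward direction coincide with the paper's own proof: the paper likewise defines $\Phi$ on a polynomial of degree at least two as the product of the $\phi$-values over a prime decomposition, invokes Theorem~\ref{Ritt.thm} to reduce well-definedness to a single Ritt move, and applies condition (ii) to that move; the converse is the same manipulation of constants (your shortcut $c\circ P_0=c$ with $\Phi(P_0)\neq 1$ is a slightly quicker route to $\Phi(c)=0$ than the paper's, which first shows all constants share a common idempotent value). Your explicit check that the four polynomials in a Ritt move remain prime is correct and is a detail the paper leaves implicit.

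The gap is in your final sentence, i.e.\ exactly in the affine case that the paper's proof silently omits. It is not true that $\phi(\mathrm{id})=0$ collapses $\Phi$ to the zero character. Take $\phi(P)=\deg P$ for primes of degree at least $2$ and $\phi(P)=0$ for constants and affine maps. This satisfies (i) and (ii): when all four polynomials in (ii) have degree at least $2$, both sides equal $\deg(P_1\circ P_2)$; and if some factor on one side has degree at most $1$, then $P_1\circ P_2$ is constant, affine, or prime of degree at least $2$, so the other side must also contain a factor of degree at most $1$ and both sides vanish. Here $\phi(\mathrm{id})=0$, yet $\Phi(P)=\deg P\neq 0$ in degree at least $2$, and the assignment $\Phi(A):=\phi(A)=0$ on affine maps destroys multiplicativity, since $\Phi(P\circ \mathrm{id})=\deg P\neq 0=\Phi(P)\Phi(\mathrm{id})$; the character this $\phi$ ``generates'' is $\deg$, which does not extend $\phi$ on $\textnormal{Aff}(\C)$. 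Even when $\lambda:=\phi(\mathrm{id})\neq 0$, condition (ii) applied to $A\circ B=(A\circ B)\circ \mathrm{id}$ gives only $\phi(A)\phi(B)=\phi(A\circ B)\,\lambda$ and nothing forces $\lambda=1$, so $\Phi(A):=\phi(A)$ again fails to be multiplicative; the correct definition is $\Phi(A):=\phi(A)/\lambda$, after which your mixed cases do go through. The degenerate case $\lambda=0$ (which forces $\phi\equiv 0$ on $\textnormal{Aff}(\C)$ but not in higher degree) needs a genuinely separate argument, for instance using the relation $P_1\circ(A\circ P_2)=(P_1\circ A)\circ P_2$ together with (ii) to show that $\chi(A):=\phi(P\circ A)/\phi(P)$ is independent of $P$ and then setting $\Phi(A):=\chi(A)$.
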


\begin{proof} Let $P$ be a composite polynomial and $P=P_1\circ P_2\circ ...
\circ P_n$ be a prime decomposition of $P$,  define $$\Phi(P)=\phi(P_1)\cdot
\phi(P_2)\cdot ...\cdot \phi(P_n).$$ Let us check that $\Phi$ is well defined.
By Theorem \ref{Ritt.thm},  it is enough to consider a step modification of
$P_1\circ P_2\circ ... \circ P_n$. Let $Q_j$ and $Q_{j+1}$ be two polynomials
such that $P_j\circ P_{j+1}=Q_j\circ Q_{j+1}$, then by condition (ii) we have
$\phi(P_j)\cdot \phi(P_{j+1})=\phi(Q_j)\cdot \phi(Q_{j+1})$, in consequence 
$$\phi(P_1)\cdot \phi(P_2)\cdot ...\cdot \phi(P_j)\cdot \phi(P_{j+1})\cdot
... \cdot\phi(P_n)=$$$$\phi(P_1)\cdot \phi(P_2)\cdot ...\cdot \phi(Q_j)\cdot
\phi(Q_{j+1})\cdot...\cdot
\phi(P_n).$$ Hence $\Phi$ is invariant under step modifications and, 
by Theorem \ref{Ritt.thm}, it is independent of the prime decomposition 
of $P$. It follows, from the definition, that $\Phi$ 
is a multiplicative character.

Conversely, let $\Phi$ be a multiplicative character. For any pair of 
constants $c_1$ and $c_2$, the equations  $c_1\circ c_2=c_1$ and
 $c_2\circ c_1=c_2$ imply 
$$\Phi(c_1)=\Phi(c_1\circ c_2)=\Phi(c_1)\cdot \Phi(c_2)
$$$$=\Phi(c_2)\cdot \Phi(c_1)=\Phi(c_2\circ c_1)=\Phi(c_2).$$ Then, for
every 
constant $c$, either we have $\Phi(c)=1$ or $\Phi(c)=0$. If $\Phi(c)=1$, the
equation $P(c)=P\circ c$ implies that $\Phi(P)=1$ for all $P$. Hence if $\Phi$
is not constantly $1$, then we have $\Phi(c)=0$ for every constant $c$. The
second condition follows from the fact that $\Phi$ is a multiplicative
character.
 \end{proof}

\begin{Example}[Affine characters.]
Let $H$ be the ideal of non injective polynomials. Any multiplicative
character $\chi :\textnormal{Aff}(\C)\rightarrow \C$ admits an
extension to a multiplicative character defined in $Pol(\C)$. For instance, put
$\chi(c)=0$ for all constant $c$, and $\chi(h)=0$, for
all other $h$ in $H$. 
\end{Example}

In the same way, we can extend affine characters to other semigroups containing 
$\textnormal{Aff}(\C)$, such as $Rat(\C)$, $Ent(\C)$ or the semigroup of
holomorphic correspondences discussed at the end of this work. 

Now let us construct non-trivial extension of the constant affine character
equal to $1$. In order to do so, we have to consider the bi-action, left and
right, of $\textnormal{Aff}(\C)$ on $Pol(\C)$. The bi-orbit of a polynomial $P$
is the set of
all polynomials of the form $A\circ P \circ B$, where $A, B$ belong to
$\textnormal{Aff}(\C)$. We say that a polynomial has no symmetries if, there are
no elements $A,B$, in $\textnormal{Aff}(\C)$, such that $P=A\circ P \circ B$. 

\begin{lemma}\label{lemma.affine}
 Let $P$ be a prime polynomial, and let $\mathcal{AF}(P)$ be the semigroup
generated by the bi-orbit of the Affine group of the set of iterates $\{P^n\}$.
Let $Q$ and $R$ be a pair of polynomials, of degree at least $2$, such that
$Q\circ R\in \mathcal{AF}(P)$, then $Q\in\mathcal{AF}(P) $ and
$R\in\mathcal{AF}(P)$.
\end{lemma}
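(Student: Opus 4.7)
The plan is to use Ritt's theorem (Theorem \ref{Ritt.thm}) to control prime decompositions of elements of $\mathcal{AF}(P)$. The first step is to establish a canonical form: every element $T\in\mathcal{AF}(P)$ admits a prime decomposition each of whose factors lies in the bi-orbit $\{\alpha\circ P\circ\beta:\alpha,\beta\in\textnormal{Aff}(\C)\}$ of $P$. Indeed, each generator $A\circ P^n\circ B$ expands as $(A\circ P)\circ P\circ\ldots\circ P\circ(P\circ B)$ by absorbing the outer affines into the extreme copies of $P$; composing such generators and absorbing the inner affine junctions into adjacent copies of $P$ delivers the claimed decomposition. Moreover, by Ritt's theorem the multiset of degrees of primes in any prime decomposition of $T$ is fixed, so every prime factor of $T$ has degree exactly $\deg P$.

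Given the hypothesis $T=Q\circ R$ with $\deg Q,\deg R\geq 2$, take prime decompositions $Q=Q_1\circ\ldots\circ Q_s$ and $R=R_1\circ\ldots\circ R_t$. Their concatenation
$$T = Q_1\circ\ldots\circ Q_s\circ R_1\circ\ldots\circ R_t$$
is a prime decomposition of $T$, which by Theorem \ref{Ritt.thm} is related to the canonical decomposition above by a finite sequence of Ritt transformations. The core technical claim is that each of the three Ritt transformations preserves the property \emph{every prime factor in the decomposition belongs to the bi-orbit of $P$}. Type 1 is immediate, since it merely inserts a pair $A\circ A^{-1}$. Type 2 is a swap of adjacent Tchebychev primes and preserves the multiset of primes.

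The delicate case is Type 3, where one replaces $z^k\circ(z^r\tilde P(z^k))$ by $(z^r\tilde P(z)^k)\circ z^k$. Here the degree constraint supplied by Ritt's theorem is decisive: since the other prime has degree $\deg P=k$, one gets $r+k\deg\tilde P=k$, leaving only $(r,\deg\tilde P)=(k,0)$, which yields the new prime $c^k z^k$, or $(0,1)$, which yields $(az+b)^k$. Both are manifestly affine conjugates of $z^k$, so the bi-orbit property is preserved. The general case in which $P$ is merely affine-equivalent to a monomial is reduced to the above by type-1 conjugations that bring the relevant primes into the normalized form.

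With the invariance of the bi-orbit property under Ritt transformations in hand, every $Q_i$ and $R_j$ lies in the bi-orbit of $P$, hence in $\mathcal{AF}(P)$; since $\mathcal{AF}(P)$ is a semigroup, $Q$ and $R$ themselves belong to $\mathcal{AF}(P)$. The main obstacle is the Type 3 case analysis: without Ritt's invariance of the degree multiset, the polynomial $\tilde P$ could be of arbitrary degree, and the new prime factor could in principle escape the bi-orbit of $P$; it is precisely the rigidity forced by the equal-degree constraint that keeps the argument on track.
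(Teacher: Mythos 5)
Your proof is correct and takes essentially the same route as the paper's: exhibit a canonical prime decomposition of each element of $\mathcal{AF}(P)$ with all factors in the bi-orbit of $P$, concatenate prime decompositions of $Q$ and $R$, and invoke Ritt's theorem together with the invariance of the bi-orbit property under the three Ritt transformations. If anything, your treatment of the type-3 move is more careful than the paper's, which simply asserts that Ritt transformations are ``either permutations, or substitution by a pair of elements in the bi-affine orbit''---a claim that holds here only because the equal-degree constraint forces the monomial-type move to degenerate to $(r,\deg\tilde P)=(k,0)$ or $(0,1)$, exactly as you show.
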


\begin{proof}
Since $Q\circ R$ belong to $\mathcal{AF}(P)$, there is a prime decomposition of
$Q\circ R$ whose elements are of the form $A\circ P \circ B$. By
Ritt's Theorem, any other prime decomposition of $Q\circ R$ is obtained by a
finite number of Ritt's transformations. But,  Ritt's transformations are
either permutations, or substitution by a pair of elements in the bi-affine
orbit. Hence, all prime decompositions of $Q\circ R$ have prime elements in
$\mc{AF}(P)$. Then the conclusion of the Lemma follows. 
\end{proof}

\begin{Example}
 Let $\chi$ be the constant multiplicative character equal to $1$ 
defined on $\textnormal{Aff}(\C)$, and $P$ be a prime polynomial of degree at
least $2$. Let us extend $\chi$ to all $Pol(\C)$ defining $\chi(Q)=1$ for all
$Q$ in the bi orbit by $\textnormal{Aff}(\C)$ of the set $\{P^n\}_{n\in
\mathbb{P}}$, and
$\chi(Q)=0$ for all the other polynomials $Q$ in $Pol(\C)$.  By Lemma
\ref{lemma.affine} and Theorem \ref{th.characters}, this is a well defined
character. In fact for any number $a$, defining $\chi(A\circ P^n
\circ B)=a^n$ where $A, B$ are elements in $\textnormal{Aff}(\C)$ gives other
extensions of $\chi$ in $Pol(\C)$.

To extend arbitrary multiplicative characters defined on $\textnormal{Aff}(\C)$,
the construction of the character is more involved. At least in the case where
$P$ is a prime polynomial, such that every iterate $P^n$ is without symmetries,
it is possible to extend any multiplicative character $\chi$ on
$\textnormal{Aff}(\C)$. 
\end{Example}

The ideal of constants is very useful to understand the structure of
$Map(X)$. A homomorphism $\phi:Map(X)\rightarrow Map(Y)$ is called
\textit{geometric} if, there is a map $f:X\rightarrow Y$ satisfying
$\phi(P)\circ f =f\circ P$ for every $P\in Map(X)$. Now we recall a result due  
to Schreier that describes the semigroup $Map(X)$  using the ideal of
constants. 
For further details see  \cite{Schre.end}, and also  the discussions in Eremenko's 
paper \cite{EreEnd} and Magill's survey \cite{Mag1}.

\begin{lemma}[Schreier's Lemma]\label{StWier.lemma} 
Let $\phi:Map(X)\rightarrow Map(Y)$ be a homomorphism, then $\phi$ is
geometric. In the case where $\phi\in Aut(Map(X))$ and $\phi(P)\circ f= f\circ
P$,  then $f$ is a bijection of $X$ and $\phi(P)=f\circ P \circ f^{-1}$, for all
$P\in Map(X)$. 
\end{lemma}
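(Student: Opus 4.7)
The plan is to exploit the algebraic characterization of the ideal of constants $\mc{I}\subset Map(X)$ recalled just before the lemma: an element $c\in Map(X)$ is a constant if and only if it is a left zero for composition, i.e.\ $c\circ g=c$ for every $g\in Map(X)$ (this is property (ii) of the excerpt). Since being a left zero is a purely algebraic condition, the first and main step is to verify that $\phi$ sends the ideal of constants of $Map(X)$ into the ideal of constants of $Map(Y)$.

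Granting this, I would define $f:X\to Y$ by letting $f(x)$ be the unique value of the constant map $\phi(c_x)$, where $c_x$ denotes the constant in $Map(X)$ with value $x$. The intertwining relation $\phi(P)\circ f=f\circ P$ then drops out of the one-line identity $P\circ c_x=c_{P(x)}$ in $Map(X)$: applying $\phi$ yields $\phi(P)\circ c_{f(x)}=c_{f(P(x))}$, and reading both sides as constant maps of $Y$ and evaluating at any point gives the pointwise equation $\phi(P)(f(x))=f(P(x))$.

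For an automorphism $\phi\in Aut(Map(X))$ (so $Y=X$), I would apply the same construction to $\phi^{-1}$ to obtain a map $f':X\to X$ with $\phi^{-1}(Q)\circ f'=f'\circ Q$. Substituting $Q=\phi(P)$ and comparing with the relation satisfied by $f$ forces $f'=f^{-1}$, so $f$ is a bijection. The conjugation formula $\phi(P)=f\circ P\circ f^{-1}$ is then immediate by right multiplication with $f^{-1}$.

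The hard step is the very first one: promoting the obvious identity $\phi(c)\circ\phi(g)=\phi(c)$, which only expresses that $\phi(c)$ is a left zero on the subsemigroup $\phi(Map(X))$, to the genuine left-zero condition $\phi(c)\circ h=\phi(c)$ for every $h\in Map(Y)$. In the automorphism case this is automatic from surjectivity; for a general homomorphism one must argue that $\phi(Map(X))$ contains enough elements to detect constants in $Y$, which is precisely what the algebraic characterization (i)--(ii) of $\mc{I}$ from the excerpt is designed to deliver. This is the only nontrivial algebraic content in the lemma, and once it is established the rest of the proof is a bookkeeping exercise with the identity $P\circ c_x=c_{P(x)}$.
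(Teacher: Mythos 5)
Your proposal is correct and follows essentially the same route as the paper's proof: restrict $\phi$ to the ideal of constants to obtain $f$, read the intertwining relation off the identity $P\circ c_x=c_{P(x)}$, and apply the same construction to $\phi^{-1}$ to get bijectivity and the conjugation formula in the automorphism case. The one step you flag as still needing an argument --- upgrading $\phi(c)\circ\phi(g)=\phi(c)$ to the statement that $\phi(c)$ is genuinely constant in $Map(Y)$ when $\phi$ is not surjective --- is exactly the step the paper also passes over, asserting only that $\phi$ ``preserves the properties of the ideal of constants''; in the automorphism case, which is the one the rest of the paper actually uses, your surjectivity remark settles it.
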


\begin{proof}
Consider the restriction $f:=\phi_{|X}$ to the ideal of constants. Since $\phi$ is
a homomorphism, it maps ideals into ideals, it also preserves the properties of
the ideal of constants, hence $f$ sends constants to constants. So 
$f$ is a map from $X$ to $Y$. Moreover, $$\phi(P(x))=f(P(x))$$ since $P(x)\in X$. Also,
$$\phi(P(x))=\phi(P\circ x)=\phi(P)\circ f(x)=\phi(P)(f(x)),$$ hence
$$\phi(P)(f(x))=f(P(x)).$$ If $\phi\in Aut(Map(X))$, then $f$ is a map from
$X$ to itself. Moreover, since $\phi$ is an automorphism, we can apply the
argument to $\phi^{-1}$, so we get that $f$ is invertible. Which implies that
$f$ is a bijection and the formula $\phi(P)=f\circ P \circ f^{-1}$.
\end{proof}

In fact, there is no need that the homomorphism in Lemma \ref{StWier.lemma} is 
defined in all $Map(X)$, the same proof above shows.

\begin{corollary}\label{Sch.coro} Let $S_1$ and $S_2$ be subsemigroups of 
$Map(X)$ and $Map(Y)$, respectively, and such that $X_1=S_1\cap X$ and
$Y_1=S_2\cap Y$ are both non empty sets. If $\phi:S_1\rightarrow S_2$ is a
homomorphism, then there exist $f:X_1\rightarrow Y_1$, such that for all
$h\in S_1$, $\phi(h)\circ f=f\circ h$. Moreover, 
\begin{itemize}
 \item the homomorphism $\phi$ is injective, or surjective, if and only
if, the map $f$ is injective or surjective. In particular, $\phi$ is an
isomorphism if, and only if, $f$ is a bijection.

\item When $S_1$ and $S_2$ are
topological semigroups, then $\Phi$ is continuous if, and only if, $f$ is
continuous.

\end{itemize}
 \end{corollary}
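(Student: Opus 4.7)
The plan is to adapt the proof of Schreier's Lemma to the subsemigroup setting, observing that within $S_1$ and $S_2$ the subsets $X_1$ and $Y_1$ play the role of ideals of constants, satisfying properties (i) and (ii) relative to elements of their ambient subsemigroup. The natural candidate for $f$ is the restriction $\phi|_{X_1}$, and the core of the argument is to show that this restriction actually lands in $Y_1$.

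First, I would establish that $\phi(X_1) \subseteq Y_1$. Given $x \in X_1$, from the identity $x \circ g = x$, valid for all $g \in S_1$, one obtains $\phi(x) \circ \phi(g) = \phi(x)$ for every $g \in S_1$; similarly, $g \circ x \in X_1$ yields $\phi(g) \circ \phi(x) \in \phi(X_1)$. Using the non-emptiness of $Y_1$, I would then conclude that $\phi(x)$ shares the characterizing properties of a constant element within $S_2$, so $\phi(x) \in Y_1$. Once this is secured, the commutation relation $\phi(h) \circ f = f \circ h$ follows at once from $f(h(x)) = \phi(h \circ x) = \phi(h) \circ \phi(x) = \phi(h) \circ f(x)$, reading both sides as constant maps in $S_2$.

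For the equivalences, one direction of each is immediate: if $\phi$ is injective, surjective, or continuous, then $f = \phi|_{X_1}$ inherits the property as a restriction. For the converses, the commutation relation is the key tool. If $f$ is injective and $\phi(h_1) = \phi(h_2)$, then for every $x \in X_1$ we have $f(h_1(x)) = \phi(h_1)(f(x)) = \phi(h_2)(f(x)) = f(h_2(x))$, so injectivity of $f$ forces $h_1 = h_2$ on the test set $X_1$, and hence $h_1 = h_2$ in $S_1$. If $f$ is surjective, every element of $Y_1$ arises as $\phi(x)$ for some $x \in X_1$, and by the homomorphism property together with the commutation one recovers $\phi(S_1) = S_2$. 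Continuity of $f$, together with continuity of the semigroup operations, allows $\phi(h)$ to be reconstructed continuously from $f$ via the commutation.

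The hard part will be the very first step, the verification that $\phi(X_1) \subseteq Y_1$. The characterization of constants via properties (i) and (ii) originally quantifies over all of $Map(X)$, whereas inside a subsemigroup those properties are available only relative to $S_1$; this is precisely where the non-emptiness hypothesis on $Y_1$ earns its keep, providing the anchor constants in $S_2$ needed to pin $\phi(x)$ down as a genuine constant rather than merely as a non-trivial idempotent in $S_2$.
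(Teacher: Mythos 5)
Your overall strategy coincides with the paper's: the paper gives no separate proof of this corollary, remarking only that ``the same proof above shows'' it, i.e.\ one sets $f:=\phi|_{X_1}$, checks that this restriction lands in $Y_1$, and then runs the one-line computation $f(h(x))=\phi(h\circ x)=\phi(h)(f(x))$. Your computation and your treatment of the ``moreover'' equivalences are fine, and more explicit than anything the paper offers.

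The gap is exactly where you locate it, and the mechanism you propose does not close it. From $x\circ g=x$ you obtain $\phi(x)\circ\phi(g)=\phi(x)$ only for $g\in S_1$, so $\phi(x)$ is a left zero \emph{relative to the image} $\phi(S_1)$, not relative to $S_2$, let alone $Map(Y)$. A constant $d\in Y_1$ lying outside $\phi(S_1)$ satisfies no identity involving $\phi(x)$ that a homomorphism is obliged to preserve, so the non-emptiness of $Y_1$ supplies no ``anchor'': there is nothing to pin $\phi(x)$ to, and it can indeed be a non-constant idempotent. Concretely, take $X=\{1,2\}$, $S_1=Map(X)$, $Y=X\times\{a,b\}$, define $\phi(g)(x,t)=(g(x),t)$, and let $S_2$ be $\phi(Map(X))$ together with all constant maps of $Map(Y)$; this $S_2$ is a subsemigroup, $Y_1\neq\emptyset$, and $\phi$ is an injective homomorphism, yet $\phi(c_1)$ (with $c_1$ the constant map of $X$ with value $1$) is the non-constant retraction $(x,t)\mapsto(1,t)$, so $\phi|_{X_1}$ does not map $X_1$ into $Y_1$. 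An intertwining map still exists in this example, namely $f(x)=(x,a)$, but it is not the restriction of $\phi$ to the constants, so your construction of $f$ — and with it the ``moreover'' clauses, which you derive from $f=\phi|_{X_1}$ — would fail. To be fair, the paper's own proof of Lemma~\ref{StWier.lemma} contains the same unjustified step (``$\phi$ preserves the properties of the ideal of constants, hence $f$ sends constants to constants''), so you have reproduced the paper's argument, gap included; but some additional hypothesis (for instance surjectivity of $\phi$ onto $S_2$, under which $Y_1\subseteq\phi(S_1)$ and your anchoring argument does work) or a genuinely different construction of $f$ is needed to make the first step go through.
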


Along with the ideal of constants, the affine group $\textnormal{Aff}(\C)$ plays
an important role in the description of automorphisms of polynomials. Later on,
we will consider generalizations to semigroups generated by correspondences. A
particular case of Lemma \ref{StWier.lemma} is the following

\begin{corollary}
For any set $X$, the group $Aut(Map(X))$ is isomorphic to $Bij(X)$.
\end{corollary}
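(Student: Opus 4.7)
The plan is to exhibit an explicit group isomorphism $\Psi:Bij(X)\to Aut(Map(X))$ by conjugation, namely $\Psi(f)(P)=f\circ P\circ f^{-1}$, and then invoke Schreier's Lemma to get surjectivity.

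First I would check that $\Psi$ is well defined and is a group homomorphism. For any bijection $f$ of $X$, the map $\Psi(f)$ sends $Map(X)$ to itself, and its inverse is clearly $\Psi(f^{-1})$, so $\Psi(f)\in Aut(Map(X))$. The homomorphism property $\Psi(f\circ g)=\Psi(f)\circ\Psi(g)$ is immediate from associativity of composition and the identity $(f\circ g)^{-1}=g^{-1}\circ f^{-1}$.

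Next I would verify injectivity of $\Psi$. Suppose $\Psi(f)$ is the identity of $Aut(Map(X))$, so $f\circ P = P\circ f$ for every $P\in Map(X)$. Applying this to the constant map $c_x$ (with value $x$) gives $f\circ c_x = c_x\circ f$; the left-hand side is the constant map with value $f(x)$, while the right-hand side is $c_x$ itself. Hence $f(x)=x$ for every $x\in X$, so $f$ is the identity.

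Finally, for surjectivity I would directly apply Lemma \ref{StWier.lemma}: given any $\phi\in Aut(Map(X))$, the lemma produces a bijection $f$ of $X$ such that $\phi(P)=f\circ P\circ f^{-1}$ for all $P\in Map(X)$, which is exactly the statement that $\phi=\Psi(f)$. There is no real obstacle here since the substance of the argument is already packaged in Schreier's Lemma; the only thing to be careful about is the (easy) injectivity step, which requires testing the conjugation action on the ideal of constants rather than on an arbitrary element.
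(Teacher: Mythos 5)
Your proof is correct and follows the same route the paper intends: the paper presents this corollary as an immediate particular case of Schreier's Lemma (Lemma \ref{StWier.lemma}), with the conjugation map $f\mapsto(P\mapsto f\circ P\circ f^{-1})$ supplying the isomorphism. Your write-up simply makes explicit the routine verifications (well-definedness, homomorphism property, and injectivity via the ideal of constants) that the paper leaves to the reader.
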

 
Let $Gal(\C)$ denote the absolute Galois group of $\C$, that is, the full group of field
automorphisms of $\C$. Remind that since every orientation preserving element in
$Gal(\C)$ must fix the complex rationals, the identity and complex conjugation
are the only continuous elements in $Gal(\C)$.
The action of $Gal(\C)$ extends to an action in $Rat(\C)$, the semigroup of
rational functions in $\C$.  In particular, the action of $Gal(\C)$  in $\C$
extends to an action in $Pol(\C)$.

\begin{proposition}\label{prop.aut.pol} The group of automorphisms of
$Pol(\C) $ is generated by $Gal(\C)$ and $\textnormal{{Aff}}(\C)$. Moreover,
$Aut(Pol(\C))=Aut(\textnormal{Aff}(\C))$.
\end{proposition}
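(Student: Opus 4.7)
The plan is to apply Schreier's Lemma to any $\phi\in Aut(Pol(\C))$ and then squeeze the resulting conjugating bijection of $\C$ using the algebraic constraint that invertible elements go to invertible elements.

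First I would invoke Corollary \ref{Sch.coro} for the subsemigroup $Pol(\C)\subset Map(\C)$, which contains the ideal of constants. This produces a bijection $f\colon\C\to\C$ with $\phi(P)=f\circ P\circ f^{-1}$ for every $P\in Pol(\C)$, reducing the problem to classifying the bijections of $\C$ whose conjugation preserves $Pol(\C)$.

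Next, the group of composition-invertible elements of $Pol(\C)$ is precisely $\textnormal{Aff}(\C)$, since $P\circ Q=Q\circ P=z$ forces $\deg(P)=\deg(Q)=1$. Thus $\phi$ restricts to an automorphism of $\textnormal{Aff}(\C)$, which means $f\circ A\circ f^{-1}\in\textnormal{Aff}(\C)$ for every $A\in\textnormal{Aff}(\C)$. Specializing to the translations $T_b\colon z\mapsto z+b$ and to the dilations $D_a\colon z\mapsto az$ yields two functional equations
\[
f(x+b)=\alpha(b)\,f(x)+\beta(b),\qquad f(ax)=\gamma(a)\,f(x)+\delta(a),
\]
for scalar functions $\alpha,\beta,\gamma,\delta$ on $\C$.

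The main step, and the principal obstacle, is to solve this system. Setting $g(z):=f(z)-f(0)$, the first relation will give $g(x+b)=g(x)+g(b)+c\,g(x)g(b)$ for some $c\in\C$; imposing the second relation and using bijectivity of $f$ will rule out $c\neq 0$ (which would confine $g$ to at most two values). Hence $g$ is additive, and after normalizing $\sigma(z):=g(z)/g(1)$ the second equation becomes $\sigma(uv)=\sigma(u)\sigma(v)$ with $\sigma(1)=1$. Thus $\sigma$ is a field automorphism of $\C$, i.e.\ $\sigma\in Gal(\C)$, and $f=A\circ\sigma$ for the affine map $A(z):=g(1)z+f(0)$. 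Consequently $\phi$ is a composition of the inner automorphism by $A$ and the natural action of $\sigma$ on $Pol(\C)$, giving the first assertion. The converse inclusion is immediate, since conjugation by affine maps and the $Gal(\C)$-action both visibly preserve $Pol(\C)$.

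For the equality $Aut(Pol(\C))=Aut(\textnormal{Aff}(\C))$, I would study the restriction homomorphism $\rho\colon Aut(Pol(\C))\to Aut(\textnormal{Aff}(\C))$. Its injectivity is direct: if $\rho(\phi)=\mathrm{id}$, then $f$ commutes with every element of $\textnormal{Aff}(\C)$, and specializing to translations and dilations forces $f=\mathrm{id}$. For surjectivity, one runs the analogous analysis internally in $\textnormal{Aff}(\C)\cong\C\rtimes\C^*$: since the translations form the derived (hence characteristic) subgroup, any $\psi\in Aut(\textnormal{Aff}(\C))$ respects the semidirect decomposition, and a short cocycle computation on the complement $\C^*$ produces $A\in\textnormal{Aff}(\C)$ and $\sigma\in Gal(\C)$ for which $\psi$ is conjugation by $A\circ\sigma$. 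This conjugation then plainly extends to an element of $Aut(Pol(\C))$ restricting to $\psi$, completing the argument.
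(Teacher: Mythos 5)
Your argument is correct and follows essentially the same route as the paper's proof: apply Schreier's Lemma to get the conjugating bijection $f$, observe that $\phi$ must preserve $\textnormal{Aff}(\C)$, and then use translations and dilations to show that $g(z)=f(z)-f(0)$ is additive and that $g/g(1)$ is multiplicative, hence lies in $Gal(\C)$, so $f$ is affine composed with a field automorphism. The only cosmetic differences are that you identify $\textnormal{Aff}(\C)$ as the group of units rather than via degree or injectivity of polynomials, and you extract invariance of the translation subgroup from the functional equation rather than from its being the commutator subgroup of $\textnormal{Aff}(\C)$; neither changes the substance.
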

\begin{proof}

Let $\phi$ be an element of $Aut(Pol(\C))$. By 
Lemma \ref{StWier.lemma}, the
restriction $f=\phi_{|_\C}$ is a bijection from $\C$ to $\C$, and
$\phi(P)=f\circ P \circ f^{-1}.$ First, let us check that  $\phi=Id$ if, and only if,
$f=Id$. Note that we can realize evaluation as composition with a constant function.
If $f=Id$,  then we have $$\phi(P)(z)=\phi(P)\circ
f(z)=\phi(P(z))=f(P(z))=P(z)$$ for every polynomial $P$ and $z\in \C,$ that is
$\phi=Id$. The converse is clear.

Since, by Lemma \ref{StWier.lemma}, $\phi$ is a conjugation, then $\phi(P)$ and
$P$ have the same degree. In particular, $\phi$ leaves the affine
group $\textnormal{Aff}(\C)$ invariant, so
$\phi(\textnormal{Aff}(\C))=\textnormal{Aff}(\C)$.  This fact also
follows from the characterization of $\textnormal{Aff}(\C)$ as the set of
injective polynomials. In particular, $Aut(Pol(\C))\subset
Aut(\textnormal{Aff}(\C))$, the converse is also true by Lemma
\ref{StWier.lemma}, since any conjugacy in the $\textnormal{Aff}(\C)$ extends to
a conjugacy in $Pol(\C)$.

The group of translations $T$ is the commutator of $\textnormal{Aff}(\C)$, hence
$T$ is invariant under $\phi.$ The value of a translation at one point,
determines the translation. Let $\tau_c$ denote the translation $z\mapsto z+c$,
since $$\phi(\tau_c)(f(0))=f\circ \tau_c\circ
f^{-1}(f(0))=f(c),$$ then $$\phi(\tau_c)=\tau_{(f(c)-f(0))}.$$

Define $g(z)=f(z)-f(0)$, then $g$ is a bijection of $\C$ which is the restriction to the
constants of the map
$\tilde{\phi}=\tau_{-f(0)}\circ \phi$ and $g(0)=0$. By definition, 
$\tilde{\phi}\in
Aut(Pol(\C))$ and $\phi(\tau_c)=\tau_{g(c)}$, it follows that
$$\phi(\tau_{c_1+c_2})=\phi(\tau_{c_1})\circ \phi(\tau_{c_2}),$$ that is,
$$g(c_1+c_2)=g(c_1)+g(c_2).$$ 

Let $A_0$ be the group of injective polynomials
fixing $0$, since $\tilde{\phi}(0)=0$, then 
$\tilde{\phi}(A_0)=A_0$.
Now we repeat the argument above, this time in multiplicative terms, to show
that $h(c)=\frac{g(c)}{g(1)}$ is a bijection of $\C$ preserving multiplication
and $h(1)=1$. By definition, $h$ also preserves addition with $h(0)=0$, hence
$h\in Gal(\C)$. Note that $h$ is the restriction to constants of the map
$(g(1)^{-1} \tau_{-f(0)})\circ \phi$. This implies that $f=g(1) h +f(0)$ as we
wanted to show.

 \end{proof}

The proof of Proposition \ref{prop.aut.pol} can be adapted to show 

\begin{proposition}\label{prop.aut.rat} Let $Rat(\bar{\C})$ denote the semigroup of
rational maps in the Riemann sphere, then $Aut(Rat(\bar{\C)})=\langle
Gal(\C),PSL(2,\C)\rangle$
\end{proposition}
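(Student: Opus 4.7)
The plan is to follow closely the strategy of the proof of Proposition \ref{prop.aut.pol}, reducing the rational case to the polynomial case by exploiting the triple transitivity of $PSL(2,\C)$ on $\bar{\C}$.

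First, I would apply Schreier's Lemma (Lemma \ref{StWier.lemma}) to an arbitrary $\phi\in Aut(Rat(\bar{\C}))$, which is legitimate since $\bar{\C}$, the ideal of constants, lies in $Rat(\bar{\C})$. This gives a bijection $f:\bar{\C}\to\bar{\C}$ with $\phi(R)=f\circ R\circ f^{-1}$ for every $R\in Rat(\bar{\C})$. Since conjugation by a bijection preserves degree, and $PSL(2,\C)$ is exactly the set of rational maps of degree one (the injective elements of $Rat(\bar{\C})$), we obtain $\phi(PSL(2,\C))=PSL(2,\C)$. Using the three-transitive action of $PSL(2,\C)$ on $\bar{\C}$, I would choose $M\in PSL(2,\C)$ so that, after replacing $\phi$ by the normalized automorphism $\widetilde{\phi}$ induced by $M^{-1}\circ f$, the restriction $\widetilde{f}$ to the ideal of constants satisfies $\widetilde{f}(\infty)=\infty$.

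The crucial step is to show that $\widetilde{\phi}$ preserves the subsemigroup $Pol(\C)\subset Rat(\bar{\C})$. The algebraic observation I would use is that a rational map $R$ is a polynomial if and only if $R\circ c\in\C$ for every constant $c\in\C$, i.e.\ $R$ has no finite poles. Since $\widetilde{f}$ fixes $\infty$ and is a bijection of $\bar{\C}$, it restricts to a bijection of $\C$; hence for $P\in Pol(\C)$ and $c\in\C$, the value $\widetilde{\phi}(P)(c)=\widetilde{f}(P(\widetilde{f}^{-1}(c)))$ again lies in $\C$. This shows $\widetilde{\phi}(Pol(\C))\subseteq Pol(\C)$, and the same reasoning applied to $\widetilde{\phi}^{-1}$ gives an automorphism of $Pol(\C)$.

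At this point, Proposition \ref{prop.aut.pol} applies: $\widetilde{\phi}|_{Pol(\C)}$ is generated by $Gal(\C)$ and $\textnormal{Aff}(\C)$, so $\widetilde{f}|_\C$ has the form $A\circ\gamma$ with $A$ affine and $\gamma\in Gal(\C)$. Since both $A$ and $\gamma$ (extended by $\gamma(\infty)=\infty$) fix $\infty$, this identity extends to all of $\bar{\C}$, giving $\widetilde{f}=A\circ\gamma\in\langle PSL(2,\C),Gal(\C)\rangle$, and therefore $f=M\circ A\circ\gamma$ lies in the same subgroup. The reverse inclusion is immediate, since each Möbius conjugation and each coefficient-wise Galois action is visibly an automorphism of $Rat(\bar{\C})$. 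I expect the main obstacle to be the algebraic characterization of $Pol(\C)$ inside $Rat(\bar{\C})$ that makes $\widetilde{\phi}$-invariance transparent; everything else is a near-mechanical adaptation of Proposition \ref{prop.aut.pol}.
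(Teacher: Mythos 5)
Your proposal is correct and follows essentially the same route as the paper: both identify $PSL(2,\C)$ as the invertible elements preserved by any automorphism, postcompose with a M\"obius map to normalize so that $\infty$ is fixed, and then reduce to the already-established polynomial/affine case (Proposition \ref{prop.aut.pol}). The only cosmetic differences are that you detect $PSL(2,\C)$ via degree preservation rather than the identity $R\circ R^{-1}=Id$, and you pass through the invariance of $Pol(\C)$ (via the ``no finite poles'' characterization) where the paper goes directly through $\textnormal{Aff}(\C)$; since the paper records $Aut(Pol(\C))=Aut(\textnormal{Aff}(\C))$, these amount to the same argument.
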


\begin{proof}
Since $\phi(Id)=Id$, and using the formula $R\circ R^{-1}=Id$, one can check
that $\phi$ sends $PSL(2,\C)$, the group of invertible rational maps, into
$PSL(2,\C)$. Post composing $\phi$ with an element of $PSL(2,\C)$ we can assume
that $\phi(\infty)=\infty$. In this case, it follows
that $\phi(\textnormal{Aff}(\C))\subset \textnormal{Aff}(\C)$, hence if
$\phi(\infty)=\infty$ then $\phi\in Aut(\textnormal{Aff}(\C))$. Since every
element in  $\langle Gal(\C),PSL(2,\C)\rangle$ induces a conjugation in
$Rat(\C)$, we have the claim of the proposition.

\end{proof}
 
Now we want to study the semigroup of meromorphic functions $Mer(\C)$.
This semigroup contains the  semigroup of entire functions $Ent(\C)$. We recall
a theorem by Hinkkanen \cite{Hinkentire}.

\begin{theorem}[Hinkkanen]\label{Hink} Let $\phi$ be a geometric automorphism of
$Ent(\C)$, then $\phi$ is affine. 

\end{theorem}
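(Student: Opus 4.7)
The plan is to apply Schreier's Lemma to reduce the problem to classifying bijections $f$ of $\C$ that conjugate $Ent(\C)$ to itself, and then to combine the algebraic machinery from the proof of Proposition \ref{prop.aut.pol} with the analytic structure of $Ent(\C)$ to force $f$ to be affine.

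First, I would invoke Corollary \ref{Sch.coro} with $S_1=S_2=Ent(\C)\subset Map(\C)$: since $\C$ sits inside $Ent(\C)$ as the ideal of constant entire functions and $\phi$ is a geometric automorphism, one obtains a bijection $f:\C\to\C$ with $\phi(P)=f\circ P\circ f^{-1}$ for every $P\in Ent(\C)$. The problem then reduces to showing that any bijection $f$ of $\C$ such that $f\circ P\circ f^{-1}\in Ent(\C)$ for every $P\in Ent(\C)$ must be affine.

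Next, I would run the algebraic reduction exactly in the style of Proposition \ref{prop.aut.pol}. The classical theorem that every bijective entire self-map of $\C$ is affine identifies $\textnormal{Aff}(\C)$ with the group of units of $Ent(\C)$, which forces $\phi(\textnormal{Aff}(\C))=\textnormal{Aff}(\C)$. Conjugating $\phi$ by a suitable affine map, I may assume $f(0)=0$ and $f(1)=1$. The translation subgroup, being the commutator subgroup of $\textnormal{Aff}(\C)$, is preserved under $\phi$, which yields additivity $f(a+b)=f(a)+f(b)$; the multiplicative version of the same argument applied to the stabilizer of $0$ in $\textnormal{Aff}(\C)$ gives $f(ab)=f(a)f(b)$. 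Hence $f\in Gal(\C)$ is a field automorphism of $\C$.

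The main obstacle is upgrading this purely algebraic conclusion to continuity of $f$, after which the standard classification of continuous field automorphisms of $\C$ forces $f$ to be the identity or complex conjugation, in either case an affine map in the sense admitted by the statement. To produce continuity, I would use the analytic hypothesis that each $\phi(P)=f\circ P\circ f^{-1}$ is entire, hence in particular continuous. A natural first test is $P(z)=e^z$: entirety of $\phi(P)(w)=f(e^{f^{-1}(w)})$ gives continuity of this map in $w$, and restricting to $w\in\mathbb{Q}$ (on which $f^{-1}$ is the identity) shows $q\mapsto f(e^q)$ is continuous on $\mathbb{Q}$, so $f$ is continuous on the dense subset $\{e^q:q\in\mathbb{Q}\}\subset\mathbb{R}_{>0}$. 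Testing against a richer family of entire functions—shifts and iterates of the exponential, the trigonometric functions $\cos z$ and $\sin z$ to access the imaginary directions, and polynomial combinations—and exploiting the additive and multiplicative compatibility coming from $f\in Gal(\C)$ to transport continuity across the dense subsets so produced, one propagates continuity to all of $\C$. This last step, in which a wild field automorphism is ruled out purely through entire test functions, is where the delicate work of Hinkkanen's argument in \cite{Hinkentire} lies.
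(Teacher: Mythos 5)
The paper does not prove this statement: it is imported verbatim from Hinkkanen \cite{Hinkentire} and used as a black box (the authors' only gloss is the remark that no nonidentity element of $Gal(\C)$ preserves $Ent(\C)$ acting on power series). So your argument has to stand on its own, and it does not: there is a genuine gap at the decisive step. Your first two reductions are fine and simply mirror Proposition \ref{prop.aut.pol} --- Schreier's Lemma produces a bijection $f$ with $\phi(P)=f\circ P\circ f^{-1}$, the units of $Ent(\C)$ are exactly $\textnormal{Aff}(\C)$, and the translation/dilation argument forces $f\in Gal(\C)$ after normalization. But the entire content of Hinkkanen's theorem is the exclusion of \emph{discontinuous} field automorphisms, and the mechanism you propose for that cannot work. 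Every field automorphism of $\C$ fixes $\mathbb{Q}$ pointwise, so its restriction to any countable set manufactured from rational test points is automatically tame, while a wild automorphism is unbounded on every disk. Knowing that $q\mapsto f(e^q)$ is continuous on $\mathbb{Q}$ (equivalently, that the restriction of $f$ to the countable set $\{e^q: q\in\mathbb{Q}\}$ is continuous in the subspace topology) says nothing about the values of $f$ at nearby points not of that form; and ``transporting continuity across dense subsets using additivity and multiplicativity'' is precisely the step that fails, since additivity and multiplicativity are fully consistent with everywhere-discontinuity --- that is why wild automorphisms of $\C$ exist in the first place.

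What is actually needed is to exploit entirety of $\phi(P)$ as a global analytic constraint rather than as pointwise continuity on countable sets. For example, since $e^z$ omits the value $0$ and has period $2\pi i$, additivity and multiplicativity of $f$ show that $\phi(e^z)$ is a zero-free entire function with period $f(2\pi i)$, hence of the form $e^{h}$ for entire $h$; playing off growth, periods and omitted values of the images of a well-chosen family of test functions is the substance of Hinkkanen's proof in \cite{Hinkentire}, and that is exactly the part your proposal defers to rather than supplies. As written, your argument establishes only that $f\in Gal(\C)$, which is the easy half and is already what the surrounding text of the paper takes for granted.
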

In other words, except for the identity, no element in $Gal(\C)$ leaves the
semigroup $Ent(\C)$ invariant in the space of formal series. The following are
immediate consequences of Lemma \ref{StWier.lemma} and Hinkkanen's Theorem.

\begin{proposition} The group of automorphisms of $Mer(\C)$ is isomorphic
to $PSL(2,\C)$. 
 \end{proposition}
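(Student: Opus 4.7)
The plan is to combine Schreier's Lemma (Lemma~\ref{StWier.lemma}) with Hinkkanen's Theorem (Theorem~\ref{Hink}), after a normalization by an element of $PSL(2,\C)$. Since meromorphic functions take values in the Riemann sphere, the ideal of constants of $Mer(\C)$ is $\bar{\C}$, and so by Schreier's Lemma any $\phi\in Aut(Mer(\C))$ takes the form $\phi(P)=f\circ P\circ f^{-1}$ for some bijection $f:\bar{\C}\to\bar{\C}$. Conjugation by a Möbius transformation preserves meromorphicity, so $PSL(2,\C)\subset Aut(Mer(\C))$; choosing $M\in PSL(2,\C)$ with $M(f(\infty))=\infty$ and replacing $\phi$ by $\mathrm{conj}_M\circ\phi$, I may assume from the outset that $f(\infty)=\infty$, so that $f|_\C:\C\to\C$ is a bijection.

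Next I would show that under this normalization, $\phi$ restricts to an automorphism of $Ent(\C)$. The key observation is that $P\in Mer(\C)$ is entire if and only if $P\circ c$ is not the constant $\infty$ for any $c\in\C$, i.e., $P$ has no finite poles. For entire $P$, the evaluation $\phi(P)\circ c = f\circ P(f^{-1}(c))$ equals the constant $\infty$ only when $P(f^{-1}(c))=f^{-1}(\infty)=\infty$, which, since $P$ has no finite poles, forces $f^{-1}(c)=\infty$ and hence $c=\infty$. Thus $\phi(P)\in Ent(\C)$, and the symmetric argument applied to $\phi^{-1}$ (whose underlying bijection $f^{-1}$ likewise fixes $\infty$) shows that $\phi|_{Ent(\C)}$ is an automorphism, which is geometric and realized by $f|_\C$.

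Finally, I would invoke Hinkkanen's Theorem on $\phi|_{Ent(\C)}$: the bijection $f|_\C$ must be affine. Therefore $f$ extends to a Möbius transformation of $\bar{\C}$, and undoing the normalization by $M^{-1}$ shows the original $f$ lies in $PSL(2,\C)$ as well. The converse is clear, since every $M\in PSL(2,\C)$ induces an automorphism by conjugation; and the map $PSL(2,\C)\to Aut(Mer(\C))$ is injective because any $M$ commuting with every constant of $\bar{\C}$ must fix every point of $\bar{\C}$, hence be the identity. This establishes $Aut(Mer(\C))\cong PSL(2,\C)$.

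The main obstacle I anticipate is the identification of $Ent(\C)$ as a subsemigroup invariant under the conjugation by $f$: an arbitrary bijection of $\bar{\C}$ will not send entire functions to entire functions in general. The normalization $f(\infty)=\infty$ by an element of $PSL(2,\C)$ in the first step is precisely what is needed to reduce to the setting where Hinkkanen's Theorem is applicable.
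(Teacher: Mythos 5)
Your proof is correct and follows essentially the same route as the paper: normalize by an element of $PSL(2,\C)$ so that the conjugating bijection fixes $\infty$, observe that the normalized automorphism then preserves $Ent(\C)$ (entire $=$ no finite poles), and apply Hinkkanen's Theorem to conclude affineness. Your version merely spells out a few details the paper leaves implicit, such as checking the inverse automorphism and the injectivity of $PSL(2,\C)\to Aut(Mer(\C))$.
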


\begin{proof} Let $\phi$ be an element in $Aut(Mer(\C))$, and $\gamma$ be
an
element in $PSL(2,\C)$ so that $\gamma(\phi(\infty))=\infty$. 
By Lemma \ref{StWier.lemma}, $\gamma\circ \phi$ is a geometric automorphism 
in $Mer(\C)$. Now, a meromorphic map $g$ is entire if, and only if, $g$ has  no finite poles. Since infinity is  fixed by $\gamma\circ \phi$, the map  $\gamma\circ \phi$
sends entire functions into entire functions.  By Theorem \ref{Hink}, we have
$\gamma \circ \phi\in \textnormal{Aff}(\C)$ and  $\phi\in PSL(2,\C)$.  
 \end{proof}

\begin{corollary}\label{cor.mer.cont}
Every automorphism of $Mer(\C)$ is continuous. 
\end{corollary}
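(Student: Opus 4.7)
The plan is to invoke the previous proposition, which identifies $Aut(Mer(\C))$ with $PSL(2,\C)$ acting by conjugation. Concretely, every $\phi\in Aut(Mer(\C))$ has the form $\phi(g)=f\circ g\circ f^{-1}$ for a fixed Möbius transformation $f$. The task then reduces to showing that, with respect to the natural topology on $Mer(\C)$, the operations of pre- and post-composition by a Möbius transformation are continuous.

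The natural topology I would use is locally uniform convergence with respect to the spherical metric on $\bar\C$, under which $Mer(\C)$ becomes a topological semigroup (a meromorphic function is a holomorphic map $\C\to\bar\C$). Given this, I would argue as follows. First, left composition by $f$: if $g_n\to g$ locally uniformly in the spherical metric, then $f\circ g_n\to f\circ g$ locally uniformly, because $f$ is uniformly continuous on the compact space $\bar\C$. Second, right composition by $f^{-1}$: for any compact set $K\subset\C$, the image $f^{-1}(K)$ is again a compact subset of $\C$ (after possibly enlarging to avoid the pole of $f^{-1}$, which one handles by passing to an arbitrarily small neighborhood of $\infty$ and using that $g$ extends continuously to $\bar\C$). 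Since $f^{-1}$ is continuous, convergence of $g_n\to g$ on $f^{-1}(K)$ yields convergence of $g_n\circ f^{-1}\to g\circ f^{-1}$ on $K$.

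Putting these together, $\phi(g_n)=f\circ g_n\circ f^{-1}\to f\circ g\circ f^{-1}=\phi(g)$, so $\phi$ is continuous. The only real subtlety, which I would treat carefully but briefly, is the point $\infty$: one must check that right composition by $f^{-1}$ respects spherical convergence even when $f^{-1}$ has a pole in $\C$, which it does when $f(\infty)\in\C$. This is handled by working throughout on $\bar\C$ and noting that any meromorphic function on $\C$ extends to a continuous map on $\bar\C$ valued in $\bar\C$, so the spherical uniform topology is the restriction of the compact-open topology of $C(\bar\C,\bar\C)$, on which conjugation by the homeomorphism $f$ is trivially continuous. This reformulation removes the apparent singularity and is, I expect, the cleanest way to conclude.
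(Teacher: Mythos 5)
Your overall route is the same as the paper's: the corollary is presented there as an immediate consequence of the preceding proposition, which identifies every automorphism of $Mer(\C)$ with conjugation by a M\"obius transformation, so the only remaining point is that such a conjugation is continuous for locally uniform spherical convergence. It is in your verification of that last point that there is a concrete error. The claim that \emph{any meromorphic function on $\C$ extends to a continuous map $\bar{\C}\to\bar{\C}$} is false: $e^{z}$, $\sin z$ and $\tan z$ all lie in $Mer(\C)$ and none of them has a limit at $\infty$ (for $\tan z$ the poles even accumulate there). Consequently the ``cleanest way to conclude'' via the compact--open topology on $C(\bar{\C},\bar{\C})$ does not go through, and the subtlety at the pole of $f^{-1}$ is not removed by passing to $\bar{\C}$.

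The repair is to observe that the problematic case cannot occur. If $f(\infty)=z_0\in\C$, then for a transcendental $g\in Mer(\C)$ the composition $g\circ f^{-1}$ has an essential singularity at $z_0$, so $f\circ g\circ f^{-1}$ is not meromorphic on $\C$ and conjugation by $f$ does not preserve $Mer(\C)$. Hence any M\"obius map realizing an automorphism of all of $Mer(\C)$ must fix $\infty$, i.e.\ lie in $\textnormal{Aff}(\C)$ (this is consistent with the paper's proof of the proposition, where $\gamma\circ\phi$ is shown to be affine). Once $f\in\textnormal{Aff}(\C)$, your first, pedestrian argument is already complete: post-composition by $f$ is continuous because $f$ is uniformly continuous for the spherical metric, and pre-composition by $f^{-1}$ is continuous because $f^{-1}$ carries compact subsets of $\C$ to compact subsets of $\C$. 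With that substitution the proof is correct and coincides with the paper's (implicit) argument.
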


\begin{corollary}\label{cor.ext.cont} A map $\phi$ in $Aut(Rat(\C))$ is
continuous if, and only if, $\phi$ extends to a map in $Aut(Mer(\C))$.
\end{corollary}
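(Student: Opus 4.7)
The plan is to deduce both implications from the structural results already established: the identification $\mathrm{Aut}(Mer(\C))\cong PSL(2,\C)$ in the preceding proposition, Corollary \ref{cor.mer.cont}, and the description of $\mathrm{Aut}(Rat(\C))$ supplied by Proposition \ref{prop.aut.rat} (applied to $Rat(\C)$ in place of $Rat(\bar{\C})$).

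For the ``extends implies continuous'' direction, I would argue in one line: if $\phi\in\mathrm{Aut}(Rat(\C))$ extends to some $\Phi\in\mathrm{Aut}(Mer(\C))$, then Corollary \ref{cor.mer.cont} says $\Phi$ is continuous on $Mer(\C)$, and the restriction $\phi=\Phi|_{Rat(\C)}$ is therefore continuous on the subsemigroup $Rat(\C)$. Nothing deeper is needed here.

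For the converse, let $\phi\in\mathrm{Aut}(Rat(\C))$ be continuous. By Proposition \ref{prop.aut.rat}, $\phi$ decomposes as $\phi=\phi_\gamma\circ\phi_\sigma$ with $\gamma\in PSL(2,\C)$ acting by conjugation and $\sigma\in Gal(\C)$ acting on coefficients. Schreier's Lemma, in the form of Corollary \ref{Sch.coro}, converts continuity of $\phi$ into continuity of the underlying bijection $f$ of $\bar{\C}$, and hence of the Galois factor $\sigma$. By the remark preceding Proposition \ref{prop.aut.pol}, the only continuous elements of $Gal(\C)$ are the identity and complex conjugation; and the use of Hinkkanen's Theorem \ref{Hink} inside the proof of the preceding proposition excludes complex conjugation from arising as the Schreier bijection of any automorphism of $Mer(\C)$. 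This forces $\sigma=Id$, so $\phi=\phi_\gamma$ is a pure M\"obius conjugation, and the same formula $g\mapsto \gamma\circ g\circ\gamma^{-1}$ defines an extension of $\phi$ to an element of $\mathrm{Aut}(Mer(\C))\cong PSL(2,\C)$.

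I expect the delicate step to be the elimination of complex conjugation from the Galois factor of a continuous $\phi$: this is where one must genuinely invoke Hinkkanen's Theorem and the identification $\mathrm{Aut}(Mer(\C))\cong PSL(2,\C)$ from the preceding proposition, rather than only the Schreier/geometric framework. Everything else is routine bookkeeping, since the M\"obius conjugation extension is manifestly well defined on $Mer(\C)$, and the continuity transferred between $\phi$ and $\Phi$ only uses Corollary \ref{cor.mer.cont} and restriction.
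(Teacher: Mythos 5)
The paper states this corollary without an explicit proof, so there is no written argument to match against; the intended derivation is clearly from the preceding proposition ($Aut(Mer(\C))\simeq PSL(2,\C)$), Corollary \ref{cor.mer.cont}, and the description of $Aut(Rat(\bar\C))$ as $\langle Gal(\C),PSL(2,\C)\rangle$. Your ``extends implies continuous'' direction is fine: restricting a continuous automorphism of $Mer(\C)$ to the subsemigroup $Rat(\C)$ is continuous, and nothing more is needed.

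The converse direction, however, contains a genuine gap at exactly the step you flag as delicate. Having reduced to $\sigma\in\{Id,\,z\mapsto\bar z\}$, you eliminate complex conjugation by appealing to the fact that it cannot arise as the Schreier bijection of an automorphism of $Mer(\C)$. This is backwards: at that point in the argument $\phi$ is only known to be a continuous automorphism of $Rat(\C)$, and you have not yet established that it extends to $Mer(\C)$, so no property of $Aut(Mer(\C))$ can be used to constrain its Galois factor. Worse, the intermediate conclusion $\sigma=Id$ is simply false: conjugation by $z\mapsto\bar z$ sends $P/Q$ to the rational map with conjugated coefficients, hence is a continuous automorphism of $Rat(\C)$ whose Galois factor is complex conjugation and not the identity. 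The correct way to close this case is the opposite of exclusion: one must show that conjugation by an anti-affine (or anti-M\"obius) map \emph{does} extend to $Mer(\C)$, since $g\mapsto\overline{g(\bar z)}$ carries meromorphic functions to meromorphic functions (on power series it conjugates the coefficients). This, incidentally, exposes a tension with the literal statements of Hinkkanen's theorem and of the proposition $Aut(Mer(\C))\simeq PSL(2,\C)$ as quoted in the paper, which should be read as allowing the orientation-reversing conjugations; but in any reading, your argument as written either proves a false intermediate claim or, if one insists on $Aut(Mer(\C))=PSL(2,\C)$ strictly, leaves the complex-conjugation case as an unhandled counterexample candidate. You need to treat that case explicitly by exhibiting the extension rather than excluding it.
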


All above gives  a characterization of elements in $Bij(\C)$ that belong to the
Galois group $Gal(\C)$.

\begin{theorem}\label{Gal.thm} Let $F$ be an element in  $Bij(\C)$, let us
assume it 
fixes three points in $\C$, then the following are equivalent.
 \begin{itemize}
  \item[i)]  The map $F$ belongs to $Gal(\C)$.
  \item[ii)] The induced map in $Map(\C)$ sends $Rat(\C)$ into itself.
   \item[iii)] The induced map in $Map(\C)$ sends $Pol(\C)$ into itself. 
\item[iv)] The induced map in $Map(\C)$ sends $\textnormal{Aff}(\C)$ into
itself.
 \end{itemize}

\end{theorem}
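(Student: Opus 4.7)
The plan is to prove the four implications cyclically, with $(i) \Rightarrow (ii) \Rightarrow (iii) \Rightarrow (iv)$ by direct verification and $(iv) \Rightarrow (i)$ by adapting the proof of Proposition \ref{prop.aut.pol}. For $(i) \Rightarrow (ii)$: a field automorphism $F$ sends a rational map $R = P/Q$ (with $P,Q \in \C[z]$) to $R^{F}$, the rational map obtained by applying $F$ to the coefficients; one checks $F \circ R = R^{F} \circ F$, so $F \circ R \circ F^{-1} = R^{F} \in Rat(\C)$. For $(ii) \Rightarrow (iii)$: the map $F \circ P \circ F^{-1}$ takes values in $\C$ at every point of $\C$, so being rational and without finite poles it is a polynomial. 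For $(iii) \Rightarrow (iv)$: $\textnormal{Aff}(\C)$ coincides with the set of bijective polynomials, and conjugation by the bijection $F$ preserves bijectivity.

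For the main direction $(iv) \Rightarrow (i)$, I would follow the recipe of Proposition \ref{prop.aut.pol}. Let $\phi(P) = F \circ P \circ F^{-1}$. Applied to a translation $T_c(z) = z + c$ with $c \neq 0$, which is fixed-point free: by hypothesis $\phi(T_c) \in \textnormal{Aff}(\C)$ is still fixed-point free, hence a translation, say $T_{b_c}$. Unpacking $F(w+c) = F(w) + b_c$ at $w = 0$ gives $b_c = F(c) - F(0)$, so $G(z) := F(z) - F(0)$ is additive.

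Applied next to a dilation $D_a(z) = az$ with $a \neq 1$, which has unique fixed point $0$: $\phi(D_a)$ is affine with unique fixed point $F(0)$, hence of the form $z \mapsto \mu_a(z - F(0)) + F(0)$, with $a \mapsto \mu_a$ multiplicative. Substituting back yields $G(aw) = \mu_a G(w)$, and at $w=1$ this reads $\mu_a = G(a)/G(1)$. Consequently $H(z) := G(z)/G(1)$ is simultaneously additive and multiplicative with $H(1) = 1$, i.e.\ $H \in Gal(\C)$, and $F = G(1)\, H + F(0)$.

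The last step invokes the three fixed points to eliminate the remaining affine coefficients. After a preliminary affine normalization (which preserves condition (iv)) placing two of the three fixed points at $0$ and $1$, the equalities $F(0) = 0$ and $F(1) = 1$ force the constant term to vanish and $G(1) = 1$, whence $F = H \in Gal(\C)$. I expect this final normalization to be the main obstacle, since affine conjugation does not preserve membership in $Gal(\C)$ in general; the three fixed point hypothesis must therefore be used carefully to rule out a nontrivial affine twist of the underlying Galois element, so that $F$ itself (and not merely a conjugate of $F$) lies in $Gal(\C)$.
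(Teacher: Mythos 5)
Your overall strategy is the right one --- the paper offers no separate proof of this theorem (it is presented as a consequence of ``all above'', i.e.\ of the proof of Proposition \ref{prop.aut.pol}), and your chain (i)$\Rightarrow$(ii)$\Rightarrow$(iii)$\Rightarrow$(iv) together with the derivation of the decomposition $F = G(1)\,H + F(0)$ with $H \in Gal(\C)$ from condition (iv) reproduces exactly the content of that proof. (Your fixed-point characterisation of translations and dilations in place of the paper's commutator argument is in fact better suited here, since (iv) only provides an endomorphism of $\textnormal{Aff}(\C)$, not an automorphism.) Up to that point everything is correct.

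The gap is the final step, and your suspicion about it is justified: it cannot be closed as you describe, because the statement with three \emph{arbitrary} fixed points is false. Take $F(z)=\bar z + i$. Then $F=\tau_i\circ\sigma$ with $\sigma$ complex conjugation, so conjugation by $F$ preserves $\textnormal{Aff}(\C)$, $Pol(\C)$ and $Rat(\C)$, i.e.\ (ii), (iii) and (iv) all hold; $F$ fixes every point of the line $\operatorname{Im} z = 1/2$, hence fixes three points of $\C$; but $F(0)=i\neq 0$, so $F\notin Gal(\C)$. Your proposed affine normalisation moves two fixed points to $0$ and $1$ but, as you yourself note, replaces $F$ by $AFA^{-1}$, and $AFA^{-1}\in Gal(\C)$ does not yield $F\in Gal(\C)$ --- that is exactly the loophole the counterexample exploits. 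The theorem is correct only when the fixed points are required to include two rationals, in practice $0$ and $1$ (compare the paper's final corollary for $FHCorr(\bar{\C})$, which demands fixing $0$, $1$ and $\infty$). Under that hypothesis no normalisation is needed: in the identity $F=\alpha H+\beta$ one has $H(0)=0$ and $H(1)=1$ automatically, so $F(0)=0$ forces $\beta=0$, $F(1)=1$ forces $\alpha=1$, and $F=H\in Gal(\C)$. You should state and use that corrected hypothesis rather than attempt the affine reduction.
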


\subsection{Sandwich semigroups.}
Here, we give an algebraic condition for when two
rational maps are M\"{o}bius conjugated, for this we do not require any
dynamical restrictions on the rational maps. We start with the polynomial case,
where the action of $PSL(2,\C)$ is replaced by the action of
$\textnormal{Aff}(\C)$. 

Given a map $g:Y\rightarrow X$, let us define
on $Map(X,Y)$ the following
operation, for $f,h\in Map(X,Y)$ put $f\ast_g
h= f\circ g \circ h$. We denote this new semigroup by
$Map_g(X,Y)=(Map(X,Y),\ast_g)$. In
particular, if $S$ is a subsemigroup of $Map(X)$ and $g\in Map(X)$, 
the set $S_g:=(S,\ast_g)$ is also a semigroup. In particular, given a polynomial
$P$,  let us consider the semigroup $Pol_P(\C).$

\begin{theorem}\label{Thm.Sandwich} Let $P_1$ and $P_2$ be two complex
polynomials. Let $$\Phi:Pol_{P_1}(\C)\rightarrow Pol_{P_2}(\C)$$ be an
isomorphism of semigroups. Then there is $f\in Bij(\C)$, and $B\in
\textnormal{Aff}(\C)$, such that $\Phi(P)=f\circ P \circ  f^{-1}\circ B^{-1}$. 
 
\end{theorem}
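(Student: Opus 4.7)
The plan is to adapt the proof of Schreier's Lemma (Lemma \ref{StWier.lemma}) to the sandwich operations $\ast_{P_i}$, assuming (as the statement tacitly requires) that $P_1$ and $P_2$ are non-constant. First I would verify that the constant polynomials form the ideal of constants in each sandwich semigroup. For any constant $c \in \C$ and any $g \in Pol(\C)$ one has $c \ast_{P_1} g = c \circ P_1 \circ g = c$ and $g \ast_{P_1} c = g(P_1(c))$, which is again a constant; conversely, if $x \in Pol(\C)$ is non-constant then, since $P_1$ is surjective, letting $g$ run over constants sweeping out $\C$ produces values $x \ast_{P_1} g$ that fail to be identically $x$. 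Hence the constants form the ideal of constants in both $Pol_{P_1}(\C)$ and $Pol_{P_2}(\C)$, and $\Phi$ restricts to a bijection $f:\C\to\C$.

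Next I would extract the key intertwining identity from the homomorphism property applied to $h \ast_{P_1} c$. The equation $\Phi(h \ast_{P_1} c) = \Phi(h) \ast_{P_2} \Phi(c)$, evaluated for arbitrary $h \in Pol(\C)$ and $c \in \C$, unpacks, after identifying constants with $\C$, to
$$f\bigl(h(P_1(c))\bigr) \;=\; \Phi(h)\bigl(P_2(f(c))\bigr) \qquad \text{for all } c\in\C,$$
which is the functional equation
$$\Phi(h)\circ P_2 \;=\; f\circ h\circ P_1 \circ f^{-1}.$$

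Specializing $h$ to the identity polynomial gives $\Phi(\mathrm{id})\circ P_2 = f\circ P_1\circ f^{-1}$. I would then rewrite the right-hand side of the general identity as
$$f\circ h\circ P_1\circ f^{-1} \;=\; (f\circ h\circ f^{-1})\circ (f\circ P_1\circ f^{-1}) \;=\; (f\circ h\circ f^{-1})\circ \Phi(\mathrm{id})\circ P_2,$$
and use that $P_2$, being a non-constant polynomial on $\C$, is surjective and therefore right-cancellable in $Map(\C)$. Cancelling $P_2$ yields $\Phi(h) = f\circ h\circ f^{-1}\circ \Phi(\mathrm{id})$, so setting $B = \Phi(\mathrm{id})^{-1}$ (once invertibility is established) gives the stated formula.

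The main obstacle, and the step I expect to require the most care, is showing that $B\in \textnormal{Aff}(\C)$, equivalently that $\Phi(\mathrm{id})$ has degree one. I would argue by counting preimages: since $f$ is a bijection, the generic fiber of $f\circ P_1\circ f^{-1}$ has exactly $\deg(P_1)$ points; on the other hand the same map equals $\Phi(\mathrm{id})\circ P_2$, a composition of polynomials whose generic fiber has $\deg(\Phi(\mathrm{id}))\cdot\deg(P_2)$ points. Hence $\deg(\Phi(\mathrm{id}))\cdot\deg(P_2)=\deg(P_1)$. Running the same argument for $\Phi^{-1}$ yields $\deg(\Phi^{-1}(\mathrm{id}))\cdot\deg(P_1)=\deg(P_2)$, and multiplying these relations forces both degrees to equal $1$. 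Therefore $\Phi(\mathrm{id})$ is affine and invertible, $B=\Phi(\mathrm{id})^{-1}\in\textnormal{Aff}(\C)$, and the identity $\Phi(P)=f\circ P\circ f^{-1}\circ B^{-1}$ is the one claimed.
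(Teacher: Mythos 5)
Your proof is correct and follows essentially the same route as the paper: the same intertwining identity $f\bigl(h(P_1(c))\bigr)=\Phi(h)\bigl(P_2(f(c))\bigr)$ obtained by composing with constants, and the same degree-multiplicativity argument applied to $\Phi$ and $\Phi^{-1}$ to force $\deg \Phi(\mathrm{id})=1$. Your endgame --- cancelling the surjective $P_2$ on the right to obtain $\Phi(h)=f\circ h\circ f^{-1}\circ \Phi(\mathrm{id})$ directly --- is in fact cleaner than the paper's normalization via the auxiliary isomorphism $\phi_B$ (whose bookkeeping of $B$ versus $B^{-1}$ is slightly inconsistent), and your explicit check that the constants form the ideal of constants of the sandwich semigroup fills in a step the paper leaves implicit.
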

\begin{proof} We first check that
$\phi(\textnormal{Aff}(\C))=\textnormal{Aff}(\C)$. By definition, for every pair
of polynomials $P,Q$, we have $$\phi(P\ast_{P_1}  Q)=\phi(P)\ast _{P_2}\phi(Q).$$ 
Let $f=\phi|_\C$ then, taking for $Q$ a constant $c\in \C$, the equality
above becomes

\begin{equation}\label{pri.eq}
  f(P\circ P_1(c))=\phi(P)\circ P_2(f(c)),
 \end{equation}
for every polynomial $P\in Pol(\C)$. Since $\phi$ is an isomorphism, $f$ is an invertible map. Hence the equation above implies that $f$ conjugates the polynomial $P\circ P_1$ to $\phi(P)\circ P_2$. Then $deg(P\circ
P_1)= deg(\phi(P)\circ P_2)$. We obtain a similar equation for $\phi^{-1}$
$$f^{-1}(P\circ P_2(c))=\phi^{-1}(P)\circ P_1(f^{-1}(c))$$ and $deg(P\circ
P_2)= deg(\phi(P)\circ P_1)$. Since $deg$ is a multiplicative character, and takes values in
$\mathbb{N}$, for every invertible polynomial $A$ we obtain
$$deg(P_1)=deg(\phi(A))\cdot deg(P_2)$$ and $$deg(P_2)=deg(\phi^{-1}(A))\cdot
deg(P_1).$$ Hence $1=deg(\phi(A))\cdot deg(\phi^{-1}(A))$, which implies that
$\phi(\textnormal{Aff}(\C))=\textnormal{Aff}(\C)$. 

Define $B=\phi(Id)$, then $B$ is an element of $\textnormal{Aff}(\C)$, now consider the
map $\phi_B:(Pol(\C),
P_2)\rightarrow (Pol(\C), B^{-1}P_2),$ given by $\phi_B(P)=P\circ B$. The $\phi_B$ is an isomorphism of semigroups. Then the composition $\Phi=\phi_B\circ \phi$
is an isomorphism from $(Pol(\C), P_1)$ to $(Pol(\C), B^{-1}P_2)$, satisfying
$\Phi(Id)=Id$. Last equation implies that  $\Phi(P_1)=P_2$. Moreover, since
 $\Phi(c)=\phi(c)\circ B=\phi(c)=f(c)$, the restrictions to constants, of the maps 
$\phi$ and $\Phi$, are equal. If $P=Id$ in
(\ref{pri.eq}), we obtain that $P_1=f^{-1}\circ P_2\circ  f$, which implies from
(\ref{pri.eq}) that for all 
$c\in C$
$$f\circ P\circ P_1(f^{-1}(c))=\Phi(P)\circ P_2(f(f^{-1}(c))$$ $$=\Phi(P)\circ P_2(c),$$ 
then   $\Phi(P)=f\circ P \circ f^{-1}$. Hence $\phi(P)=f\circ P \circ
f^{-1}\circ B^{-1}$ as we wanted to show.

\end{proof}
\begin{corollary} Two polynomials $P_1$ and $P_2$ are affinely conjugate if, and
only if, the semigroups $Pol_{P_1}(\C)$ and
$Pol_{P_2}(\C)$ are continuously isomorphic with an
isomorphism $\phi$, such that  $\phi(Id)=Id$.
\end{corollary}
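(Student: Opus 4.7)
The plan is to verify both directions by combining the sandwich decomposition of Theorem \ref{Thm.Sandwich} with the continuity hypothesis and Proposition \ref{prop.aut.pol}.

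For the forward direction, assume $P_2 = A\circ P_1\circ A^{-1}$ with $A\in \textnormal{Aff}(\C)$ and define $\phi(P) = A\circ P\circ A^{-1}$. A direct computation, using $A\circ P_1\circ A^{-1} = P_2$, gives
\[
\phi(P\ast_{P_1} Q) = A\circ P\circ P_1\circ Q\circ A^{-1} = (A\circ P\circ A^{-1})\circ P_2\circ (A\circ Q\circ A^{-1}) = \phi(P)\ast_{P_2}\phi(Q),
\]
and the map $P\mapsto A^{-1}\circ P\circ A$ is a two-sided inverse. The properties $\phi(Id)=Id$ and continuity of $\phi$ are immediate from the formula.

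For the converse, I would feed the given $\phi$ into Theorem \ref{Thm.Sandwich}, producing $\phi(P) = f\circ P\circ f^{-1}\circ B^{-1}$ with $f=\phi|_\C\in Bij(\C)$ and $B=\phi(Id)$. The hypothesis $\phi(Id)=Id$ collapses $B$ to the identity, so $\phi(P)=f\circ P\circ f^{-1}$. Substituting $P=Id$ in equation (\ref{pri.eq}) from the proof of Theorem \ref{Thm.Sandwich} yields $P_2 = f\circ P_1\circ f^{-1}$, so $f$ already realizes a conjugacy between $P_1$ and $P_2$. What remains is to show that this conjugacy can be taken to be affine.

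The main obstacle lies in this last step. Since $\phi$ sends $Pol(\C)$ into itself, conjugation by $f$ is an automorphism of the ordinary composition semigroup $Pol(\C)$; by Proposition \ref{prop.aut.pol} we thus have $f\in\langle Gal(\C),\textnormal{Aff}(\C)\rangle$, and continuity of $\phi$ transfers to continuity of $f$ via Corollary \ref{Sch.coro}. Since the only continuous field automorphisms of $\C$ are the identity and complex conjugation, $f$ can differ from an affine map only by a possible complex-conjugation factor; the delicate point is to rule out this factor, for instance by observing that $\phi(P)$ must depend holomorphically on $P$ in coefficients, or equivalently by interpreting ``affinely conjugate'' so as to absorb continuous Galois elements. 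Once $f\in\textnormal{Aff}(\C)$ is established, the identity $P_2 = f\circ P_1\circ f^{-1}$ is exactly the desired affine conjugacy.
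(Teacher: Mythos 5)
Your overall route is the one the paper intends: the forward direction by direct computation with the conjugating affine map, and the converse by feeding $\phi$ into Theorem \ref{Thm.Sandwich}, using $\phi(Id)=Id$ to kill the factor $B$, observing that conjugation by $f$ is then an automorphism of $(Pol(\C),\circ)$ so that Proposition \ref{prop.aut.pol} places $f$ in $\langle Gal(\C),\textnormal{Aff}(\C)\rangle$, and invoking continuity to reduce the Galois part to the identity or complex conjugation. The paper states the corollary without proof, and this is exactly the argument it implicitly relies on.

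The step you flag as delicate is, however, a genuine gap, and neither of your two suggested repairs closes it. Holomorphic dependence of $\phi(P)$ on the coefficients of $P$ is not among the hypotheses --- only continuity is assumed, and conjugation by $\sigma(z)=\bar z$ is continuous, since it acts on a polynomial by conjugating its coefficients. It cannot be ruled out: take $P_1(z)=z^2+i$ and $P_2(z)=z^2-i=\sigma\circ P_1\circ\sigma$; then $\phi(P)=\sigma\circ P\circ\sigma$ is a continuous isomorphism from $Pol_{P_1}(\C)$ to $Pol_{P_2}(\C)$ with $\phi(Id)=Id$, yet $P_1$ and $P_2$ are not affinely conjugate (the multipliers of their fixed points, $1\pm\sqrt{1-4i}$ versus $1\pm\sqrt{1+4i}$, are affine conjugacy invariants and form different sets). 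What the argument actually proves is that $f=A$ or $f=A\circ\sigma$ for some $A\in\textnormal{Aff}(\C)$; the corollary as written is correct only if ``affinely conjugate'' is read to include anti-affine conjugacies, which is your second proposed escape and amounts to amending the statement rather than proving it. Your analysis is sound up to this point and in fact exposes an imprecision in the paper itself; the honest conclusion is the two-case statement, not the one claimed.
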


By substituting $\textnormal{Aff}(\C)$ by $PSL(2,\C)$, and $Pol(\C)$ by
$Rat(\C)$ in the proof of previous theorem, we obtain the following 

\begin{theorem} Let $R_1$ and $R_2$ be two complex
rational maps, and consider an automorphism of semigroups
$\Phi:Rat_{R_1}(\bar{\C})\rightarrow Rat_{R_2}(\bar{\C})$. Then there is 
$f\in Bij(\C)$ and $B\in PSL(2,\C)$ such that $\Phi(R)=f\circ R
\circ f^{-1}\circ B^{-1}$. In particular, if  $\Phi$ is continuous with
$\Phi(Id)=Id$, then $\Phi$ is conjugation by an element of $PSL(2,\C)$. 
\end{theorem}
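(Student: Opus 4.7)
The plan is to mimic the proof of Theorem \ref{Thm.Sandwich} step by step, replacing $\textnormal{Aff}(\C)$ by $PSL(2,\C)$ and $Pol(\C)$ by $Rat(\bar{\C})$, while being careful that the degree argument used there for injective polynomials goes through for invertible rational maps. Write $f:=\Phi|_{\bar{\C}}$ for the restriction to the ideal of constants. Setting $Q=c$ in the defining identity $\Phi(R\ast_{R_1}Q)=\Phi(R)\ast_{R_2}\Phi(Q)$ yields
\begin{equation}\label{keyeq}
 f(R\circ R_1(c))=\Phi(R)\circ R_2(f(c))
\end{equation}
for every rational $R$ and every $c\in\bar{\C}$. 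As before, the fact that $\Phi$ is an isomorphism forces $f$ to be a bijection, and the same identity for $\Phi^{-1}$ gives the symmetric relation.

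The first nontrivial step is to show $\Phi(PSL(2,\C))=PSL(2,\C)$. For this I use that $\deg\colon Rat(\bar{\C})\to\mathbb{N}$ is a multiplicative character under composition, that $PSL(2,\C)$ is exactly the set of elements of degree one, and that (\ref{keyeq}) tells us $f$ conjugates $R\circ R_1$ to $\Phi(R)\circ R_2$, so these maps share the same degree. Applying this with an invertible $R=A\in PSL(2,\C)$, and doing the same for $\Phi^{-1}$, I obtain $1=\deg(\Phi(A))\cdot\deg(\Phi^{-1}(A))$, which forces $\Phi(A)\in PSL(2,\C)$. In particular $B:=\Phi(Id)\in PSL(2,\C)$.

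Next I normalize. Define $\phi_B\colon (Rat(\bar{\C}),\ast_{R_2})\to (Rat(\bar{\C}),\ast_{B^{-1}R_2})$ by $\phi_B(R)=R\circ B$; this is an isomorphism of sandwich semigroups. The composition $\tilde{\Phi}:=\phi_B\circ\Phi$ is then an isomorphism of $(Rat(\bar{\C}),\ast_{R_1})$ onto $(Rat(\bar{\C}),\ast_{B^{-1}R_2})$ satisfying $\tilde{\Phi}(Id)=Id$, and its restriction to the constants is still $f$. Plugging $R=Id$ into (\ref{keyeq}) (now with $R_2$ replaced by $B^{-1}R_2$) yields $B^{-1}R_2=f\circ R_1\circ f^{-1}$; feeding this back into (\ref{keyeq}) gives $\tilde{\Phi}(R)=f\circ R\circ f^{-1}$ after cancellation, hence $\Phi(R)=f\circ R\circ f^{-1}\circ B^{-1}$, as required.

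For the final clause, assume $\Phi$ is continuous and $\Phi(Id)=Id$, so $B=Id$ and $\Phi(R)=f\circ R\circ f^{-1}$. Corollary \ref{Sch.coro} then gives that $f\in Bij(\bar{\C})$ is continuous, so $\Phi$ is a continuous automorphism of $Rat(\bar{\C})$, and Proposition \ref{prop.aut.rat} combined with the fact that the only continuous field automorphisms of $\C$ are the identity and complex conjugation pins $f$ down to an element of $PSL(2,\C)$. I expect the main obstacle to be a cosmetic one: verifying that the degree argument of Theorem \ref{Thm.Sandwich} transports cleanly to $Rat(\bar{\C})$ without needing any extra hypothesis on $R_1,R_2$, and ensuring that \eqref{keyeq} really lets one cancel $R_2$ (or $B^{-1}R_2$) on the right even at points where intermediate values might coincide with poles; this is handled because $f$ is a bijection of $\bar{\C}$ and $c$ ranges over all of $\bar{\C}$.
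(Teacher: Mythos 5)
Your derivation of the main formula $\Phi(R)=f\circ R\circ f^{-1}\circ B^{-1}$ is a faithful transcription of the paper's proof of Theorem \ref{Thm.Sandwich}, which is exactly what the paper itself prescribes for this statement (``by substituting $\textnormal{Aff}(\C)$ by $PSL(2,\C)$, and $Pol(\C)$ by $Rat(\C)$ in the proof of previous theorem''). The two points you flag as possible obstacles do go through: degree is multiplicative, is preserved under conjugation by any bijection of $\bar{\C}$ (it is the maximal fiber cardinality), and equals $1$ exactly on $PSL(2,\C)$; and the right-cancellation of $B^{-1}\circ R_2$ works because every non-constant rational map is surjective on $\bar{\C}$. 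Up to and including the identity $\Phi(R)=f\circ R\circ f^{-1}\circ B^{-1}$, your argument and the paper's coincide.

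The gap is in your last paragraph. From continuity of $\Phi$ you correctly conclude, via Corollary \ref{Sch.coro}, that $f$ is continuous, and via Proposition \ref{prop.aut.rat} that $f$ lies in $\langle Gal(\C),PSL(2,\C)\rangle$. But the continuous elements of $Gal(\C)$ are the identity \emph{and complex conjugation} --- the paper recalls exactly this just before Theorem \ref{Gal.thm} --- and complex conjugation is not in $PSL(2,\C)$. So continuity only pins $f$ down to $PSL(2,\C)$ or its composition with $z\mapsto\bar z$; the sentence ``pins $f$ down to an element of $PSL(2,\C)$'' does not follow. Concretely, for $R_1=R_2=z^2$ the map $\Phi(R)=\overline{R(\bar z)}$ is a continuous automorphism of $Rat_{z^2}(\bar{\C})$ with $\Phi(Id)=Id$, and its restriction to constants is $f(c)=\bar c\notin PSL(2,\C)$, so it is not conjugation by any M\"obius transformation. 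The final clause therefore needs either an extra hypothesis (orientation preservation of $f$, say) or the weaker conclusion that $f$ is M\"obius or anti-M\"obius. To be fair, this defect sits in the paper's own statement (the paper offers no argument for the clause at all), so it is inherited rather than introduced by you; but as written your last step would fail.
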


Which implies the following

\begin{corollary} Two rational maps $R_1$ and $R_2$ are 
conjugate by a map in $PSL(2,\C)$ if, and only if, the semigroups
$Rat_{R_1}(\C)$ and $Rat_{R_2}(\C)$ are continuously isomorphic with an
isomorphism $\phi$, such that  $\phi(Id)=Id$.
\end{corollary}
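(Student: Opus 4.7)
The plan is to derive this corollary as a clean consequence of the preceding theorem; nearly all the work is already packaged there, so I would just need to extract the biconditional carefully.

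For the easy direction, suppose $R_2 = \gamma\circ R_1\circ\gamma^{-1}$ for some $\gamma\in PSL(2,\C)$. I would define $\phi:Rat_{R_1}(\bar\C)\to Rat_{R_2}(\bar\C)$ by $\phi(R)=\gamma\circ R\circ\gamma^{-1}$ and simply check that
$$\phi(R\ast_{R_1} S)=\gamma\circ R\circ R_1\circ S\circ\gamma^{-1}=\phi(R)\circ R_2\circ\phi(S)=\phi(R)\ast_{R_2}\phi(S),$$
using the conjugation relation in the middle factor. Clearly $\phi(Id)=Id$, $\phi$ has inverse $R\mapsto\gamma^{-1}\circ R\circ\gamma$, and conjugation by an element of $PSL(2,\C)$ is continuous in the compact-open topology on $Rat(\bar\C)$. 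So $\phi$ is a continuous isomorphism fixing the identity.

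For the converse, suppose $\phi:Rat_{R_1}(\bar\C)\to Rat_{R_2}(\bar\C)$ is a continuous isomorphism with $\phi(Id)=Id$. The preceding theorem supplies $f\in Bij(\bar\C)$ and $B\in PSL(2,\C)$ with $\phi(R)=f\circ R\circ f^{-1}\circ B^{-1}$. Evaluating at $R=Id$ forces $B=Id$, so $\phi(R)=f\circ R\circ f^{-1}$, and the final sentence of the theorem (the continuity clause) upgrades $f$ from a mere bijection to an element of $PSL(2,\C)$. It then remains to extract the relation between $R_1$ and $R_2$. For this I would mimic the step in the polynomial sandwich theorem: taking a constant $c$ in the intertwining identity $f(R\circ R_1(c))=\phi(R)\circ R_2(f(c))$ and specialising $R=Id$ yields $f\circ R_1=R_2\circ f$, i.e.\ $R_2=f\circ R_1\circ f^{-1}$ with $f\in PSL(2,\C)$, which is the desired Möbius conjugacy.

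The proof has essentially no obstacle of its own; the substantive content was already absorbed by the previous theorem. The only point that needs a moment of care is the consistency of the two uses of $f$ (as the restriction to constants, and as the conjugating Möbius map), and making sure that the $\Phi(Id)=Id$ hypothesis really kills $B$ rather than merely forcing $f\circ f^{-1}\circ B^{-1}=Id$ trivially; but since $\phi$ acts by post-composition with $B^{-1}$ on top of genuine conjugation, this is immediate. I would also flag (without elaboration) that "continuous" refers to the compact-open topology on $Rat(\bar\C)$, consistent with Corollary \ref{cor.mer.cont} and the continuity clause in Corollary \ref{Sch.coro}.
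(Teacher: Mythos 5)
Your proposal is correct and follows essentially the route the paper intends: the corollary is stated as an immediate consequence of the preceding sandwich theorem, whose continuity clause already yields that $\phi$ is conjugation by some $f\in PSL(2,\C)$, and specialising the intertwining identity at $R=Id$ (exactly as in the proof of Theorem \ref{Thm.Sandwich}) gives $R_2=f\circ R_1\circ f^{-1}$. The converse direction by direct verification is the standard one, so there is nothing to add.
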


By Theorem \ref{Thm.Sandwich}, the condition $\phi(Id)=Id$ is equivalent to
require that $\phi(R_1)=R_2$. Every automorphism of $Rat(\C)$ induces an
isomorphism of sandwich semigroups. Indeed, if $\phi\in Aut(Rat(\C))$ take 
$Q, R$ rational maps such that $\phi(Q)=R$, then $\phi$ is an isomorphism
between $Rat_Q(\C)$ and $Rat_R(\C)$. Let $\psi$ be  an isomorphism of sandwich
semigroups in
$Rat(\C)$. By Theorem \ref{Thm.Sandwich} and
Lemma~\ref{StWier.lemma}, $\psi$  induces an automorphism of $Rat(\C)$ if, and
only if, $\psi(Id)=Id$.
Let us now discuss the situation of sandwich isomorphisms for small semigroups.
Let $Q$ y $R$ be two non-constant rational maps, and consider the semigroup
$S=\langle Q,R, \C \rangle$. Take $R_1$ and $R_2$ in $S$ and consider an
isomorphism $\phi$ between $S_{R_1}$ and $S_{R_2}$. Since $Q$ and $R$ are the
non constant elements in $S$ with smaller degree, then we have either
$$\phi(Q)=R \textnormal{ and }\phi(R)=Q$$ or $$\phi(Q)=Q  \textnormal{ and }
\phi(R)=R.$$ In any case, $\phi^2$ fixes $Q$ and $R$. Then the
restriction of $\phi$ to constants  is a non trivial bijection of $\C$, which
commutes with $Q$ and $R$.

\section{Semigroup representations.}

In this section, we give examples of how the theory of  semigroup
representations applies to holomorphic dynamics. 
For every $X$, 
let us consider the decomposition of $Map(X)$ into the ideal of
constants, $\mc{I}(X)$, the group of bijections $Bij(X)$ and the rest $H(X)$.
That is $Map(X)=\mc{I}(X)\cup Bij(X)\cup H(X)$, as a consequence of Corollary 
\ref{Sch.coro}, it follows that every homomorphism of $Map(X)$ into $\mc{I}$
is constant. Similarly, the only homomorphism from $Map(X)$ to $Bij(X)$ is
the constant map with value $Id$.  

In the spirit of Lemma \ref{StWier.lemma}, we consider semigroups together
with the ideal of constants. Let $A$ be any set in $X$ and $S$ a subset of
$Map(X)$, then we denote by $\langle S,A\rangle$ the semigroup generated by $S$
and the constants in $A$ regarded as semigroups of $Map(\mc{O}^+_S(A))$,
where $\mc{O^+}_S(A)$ denotes the forward $S$-orbit of $A$.  With this
construction, the ideal of constants of $\langle S, A \rangle$ is precisely
$\mc{O}^+_S(A)\cup A$.

\begin{Example}
Let $f_0=z^2$, then $\langle f_0,1\rangle =1$, since $f_0=Id=1$ in $Map(\{1\})$.
Analogously, if $a$ is a periodic orbit of $f_0$, then $\langle f_0,a\rangle$
consists of the orbit of $a$ and the cyclic permutations of this orbit. 
\end{Example}

We can generalize the previous example to rational functions $R:\C 
\rightarrow \C$. In this case, we obtain a family of semigroups $\langle
R,a\rangle$ parametrized by a point $a$ in the plane $\C$. In this way, the set
$\mathcal{D}_R=\{\langle R,a\rangle:a\in \C\}$ inherits the usual topology from
$\C.$ Let $\mc{X}_R\subset \mathcal{D}_R$ be the set of finite semigroups,
we call the set  $\mc{J}_R=\overline{\mc{X}_R}\setminus\{\textnormal{isolated
points}\}$, the \textit{algebraic Julia set} of $R$. The complement
$\mc{F}_R=\{\langle R,a\rangle:a\in \C\}\setminus \mc{J}_R$ will be called the
\textit{algebraic Fatou set} of $R$ in $\mathcal{D}_R$. In this setting, the
algebraic Fatou set is the interior of the set of free semigroups in
$\mathcal{D}_R$. These definitions reflect the dynamical Julia set $J(R)$,
which is the closure of the repelling periodic points in $\C$ and, the dynamical
Fatou set $F(R)$ which is the complement of $J(R)$ in $\C$.

\subsection{Representations of semigroups of polynomials.}

Let $\mc{P}$ be a partition of $Pol(\C)$, we say that
$\mc{P}$ is a \textit{compatible partition} if for $A,B\in \mc{P}$, and a
pair of points $a\in A$, $b\in B$, the composition $a\circ b$ belongs to a
component $C$ in $\mc{P}$ which do not depend on the representatives $a$ and
$b$. A \textit{graduation} is a partition of $Pol(\C)$ which is compatible
with composition.

As we discussed earlier in the paper, $Pol(\C)$ has a non empty set of
multiplicative characters.  Each multiplicative character in $Pol(\C)$ induces a
graduation in $Pol(\C)$. 
The fibers of multiplicative characters induce compatible partitions. In
particular, the degree of a polynomial induces a compatible partition of
$Pol(\C)$. In this case,  the classes of this partition are $Pol_d(\C)$, the
set of polynomials of given degree $d$. We will describe now some examples
of representations of semigroups of the form $\langle P, A\rangle$ into
$Pol_d(\C)$.

Note that since we are including an ideal of constants $A$ in the
domain, then we have to include the constants in $Pol_d(\C)$ as well.
Otherwise, there is no representation from $\langle P, A\rangle$ into
$Pol_d(\C)$. Nevertheless, including constants, in both domain and range, is
consistent with the philosophy of Lemma \ref{StWier.lemma}. In this setting, 
every representation of $S$ in $Pol(\C)$ is geometric, and realized by a map
defined in the complex plane.
Let $S$ be a subsemigroup in $Pol(\C)$ containing $Id$, and let us consider
representations of $S$ into $Pol_0(\C)$, the semigroup of constant polynomials.
Let $\phi:S\rightarrow Pol_0(\C)$ be a homomorphism, since $\phi(Id)$ is
constant, then $\phi(R)=\phi(Id)\circ \phi(R)=\phi(Id)$. Hence, any
representation of $S$ into the constant polynomials is a constant map. 

The theory of representation of semigroups of the form $\langle P, J(P)\rangle$
into $Pol(\C)$ is widely discussed in holomorphic dynamics in other terms. For
example, the theory of the continuous representations of  $\langle P,
J(P)\rangle$ into $Pol(\C)$ is parameterized by the $J$-stable components of
$P$. For example, see \cite{Cabre.Maki.Teic}.

Another important situation is representations of semigroups $\langle P,
\mathcal{P}(P)\rangle$ into $Pol(\C)$, here $\mathcal{P}(P)$ is the
postcritical set of $P$. Interior components of the representation
space can be parameterized by combinatorially equivalent polynomials.
Uniformization of these components by suitable geometric objects (like
suitable Teichm\"{u}ller spaces), shed light on many problems in holomorphic
dynamics. In this direction, important advances were made by Douady, Hubbard,
Lyubich, McMullen, Sullivan and Thurston, among many others. See for example
\cite{DHTop} and \cite{Mc1}.

Now, let us consider the space of representations of affine semigroups into the
space of polynomials of degree $d$. This space includes all linearizations  
around periodic orbits. Here, we review the repelling case. A complete treatment of 
linearization theory in holomorphic dynamics can be found in  Milnor's book
\cite{Mdyn}.

Let $A_\lambda$ in $ \textnormal{Aff}(\C)$ of the form $z\mapsto \lambda z$. 
Let
$P$ be a polynomial such that there exist a repelling cycle
$\mc{O}=\{z_0,z_1,...,z_n\}$ with multiplier $\lambda$. The Poincar\'e function
associated to $z_0$, is a map $\phi:\C\rightarrow \C$  sending $0$ to $z_0$
which locally conjugates 
$A_\lambda$ to $P^n$ around $z_0$. This construction induces a representation of
$ \langle A_\lambda, \C \rangle$ into $ \langle P,U_0\rangle$ for a suitable
neighborhood $U_0$ of $z_0$. Moreover, since Poincar\'e functions turn out  to be
meromorphic functions, it also induces a representation of affine
semigroups into the semigroups of meromorphic functions. Similar constructions
apply to other kind of linearizations. In the attracting case, the inverse of
the Poincar\'e function, defined on a neighborhood $U_0$ of $z_0$, is known as
K\"{o}nig's coordinate and gives a representation of $ \langle P,U_0\rangle$
into $\langle A_\lambda, \mathbb{D}_r\rangle$, where $\mathbb{D}_r$ denotes the
disk of radius $r$ and $r<1$. This construction can also be applied to the
parabolic case.

The process of renormalization, in holomorphic dynamics, gives examples of
semigroups of the form $ \langle P,U\rangle$ that admit
representations into themselves.

Let $P$ be a polynomial $P(z)$, of degree $n$, with connected and locally
connected Julia set.  Then, $\infty$ is a superattracting fixed point of $P$.
If $A_0(\infty)$ denotes the basin of $\infty$ of $P$, by B\"{o}ttcher's 
theorem,
there is a homeomorphism $\phi:\bar{\C}\setminus \bar{\mathbb{D}}\rightarrow
A_0(\infty)$, that conjugates $z\mapsto z^n$ in $\bar{\C}\setminus
\bar{\mathbb{D}}$ with $P$ in $A_0(\infty)$. Since $J(P)$ is connected and
locally connected, the map $\phi$ extends to the boundaries by Caratheodory's 
theorem. The map on the boundaries induces a representation of $\langle z^n,
\mathbb{S}^1 \rangle$
into $\langle P, J(P) \rangle$.

It would be interesting to have results, analogous to Theorem \ref{Gal.thm} or
Theorem \ref{Thm.Sandwich},
that characterizes the action of quasiconformal maps in $\C$. This would allow
us to determine quasiconformal conjugation in terms of semigroup
representations.

\subsection{When Julia set is homeomorphic to a Cantor set.}

Now consider the special case where $J(R)$ is homeomorphic to a Cantor set. For
simplicity in the arguments, let us assume that $deg(R)=2$. 

Let us consider a Jordan curve $\gamma$ containing in its interior the Julia
set and a critical point; while the other critical point and all critical values
lie outside $\gamma$. Choosing a suitable $\gamma$, we assume that
$R^{-1}(\gamma)$ is
contained in the interior of $\gamma$ and consists of two Jordan curves
$\gamma_1$ and $\gamma_2$. We get an scheme similar to the one sketched in
Figure \ref{figure.cantor}. Let us call $D_1$ and $D_2$ the interiors of
$\gamma_1$ and $\gamma_2$, respectively. 

\begin{figure}[htbp]
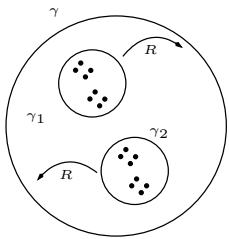

\begin{center}
\include{cantor.pspdftex}
\caption{Cantor scheme.}
\label{figure.cantor}
\end{center}
\end{figure}

With this scheme, we obtain representations of $\langle R,D\rangle$ into
other semigroups. To do so, let us modify topologically the Cantor scheme, and 
instead of the restrictions of $R$ on $D_i$, consider affine
maps  $A_i$ sending the modified $\gamma_i$ to $\gamma$. This induces a
representation of $\langle R,J(R)\rangle$ into $\textnormal{Aff}(\C)$. 

If we modify the curves $\gamma$, $\gamma_1$ and $\gamma_2$ to circles and 
considering M\"{o}bius transformations $g_i$, instead of the maps that send
$\gamma_i$ to $\gamma$. We get a representation $\Phi$ of $\langle
R,J(R)\rangle$
into a ``half'' classical Schottky group $\Gamma$ with two generators. This is
an example of a representation of non cyclic Kleinian groups in 
rational semigroups. The
conjugating map of $\Phi$ may be taken quasiconformal, hence 
the Hausdorff dimension of the limit set of the Schottky group can be estimated in terms of 
the Hausdorff dimension of $J(R)$.  In particular,
let $R(z)$ be a quadratic polynomial of the form $z^2+c$, such that $J(R)$ is a
Cantor set. In this case, the parameter $c$ belongs to the complement of the
Mandelbrot set. A theorem of Shishikura shows that there are sequences of
quadratic polynomials $R_{c_n}(z)=z^2+c_n$, with parameters $c_n$ tending to
the boundary of the Mandelbrot set, and such that the Hausdorff dimension of
the Julia sets tends to 2. With this result, Shishikura showed that the
Hausdorff dimension of the boundary of the Mandelbrot set is 2. Perhaps, using
the representation above is possible to get a result analogous of Shishikura's
theorem for the boundary of the Classical Schottky space.  

It is interesting to solve the extremal problem between these two objects from
holomorphic dynamics. In case there exist an extremal map from $\langle
R,J(R)\rangle$ into the Classical Schottky space, there would be  a sort of
estimate from above of the distance between this two pieces of
Sullivan's dictionary.

The problem to describe the set of representations of $\langle P,A\rangle$, for
an invariant set $A$, into $\langle \textnormal{Aff}(\C),\C\rangle$ is
difficult, still
remain many  questions.
In the case where $S\in Rat(\C)$, it is interesting to understand the space
of representations of $\langle S,A\rangle$ into $PSL(2,\C)$. 

\subsection{Binding semigroups of maps with constants.}

Let us consider two semigroups of the form $S_1=\langle g_1, A_1\rangle$
and $S_2=\langle g_2, A_2\rangle$, in this case the categorical sum, or
coproduct,  
$S_1\coprod S_2$, is defined as $ \langle g_1\coprod g_2, A_1\times \{1\} \sqcup
A_2\times\{2\}\rangle$, where $g_1\coprod g_2$ is a map defined on the disjoint
union $A_1\times \{1\}\sqcup A_2\times \{2\}$  by $$g_1\coprod g_2(x)=\left\{
\begin{array}{ll} g_1(x) &  \textnormal{ if } x\in A_1 \\g_2(x) &
\textnormal{ if } x\in A_2 \\ \end{array} \right. $$
Analogously, we define the binding of a countable family of semigroups of the 
form $\langle g_i, A_i\rangle$. A classical example of such construction in
dynamics is the process of mating of quadratic polynomials, first described by
 Douady in \cite{Douady}.  We start with two mateable polynomials
$S_1=\langle P_1, \mathbb{C}\rangle $ and $S_2=\langle P_1, \mathbb{C}\rangle $.
Using a topological construction, the mating $P_1\coprod P_2$ is a quadratic
rational map $\langle R, \C\rangle$. Thus we have a representation of 
$S_1\coprod S_2$ into the space of rational maps of degree $2$.

\subsubsection{Simultaneous linearizations and deformation spaces.}
Let us now discuss a more elaborated example, associated to a fixed rational map
$R_0$ of degree $d$. Let $\{a_0, a_1,..., a_{n-1}\}$ be a periodic cycle of
$R_0$, of period $n$ and multiplier $\lambda$, with $\lvert\lambda\lvert>1$. Let
us denote by $A_\lambda$ the map
$z\mapsto \lambda z$, and $\phi$ the Poincar\'{e} function associated to $R_0^n$
and $a_0$. As we discussed above $\phi$ induces a representation of $\langle
A_\lambda, \C\rangle $ into $\langle R_0, \C\rangle $. The same is true for
$R^i_0\circ \phi$, for each $i=0,...,n-1$,  all together, induce a
representation of the binding $\langle \coprod_{i=0}^{n-1}
A_{\lambda}, \sqcup_{i=0}^{n-1} \C\times \{i\}\rangle$ into $\langle R_0,
\C\rangle $, here
we put a component  $\langle A_\lambda, \C\rangle $ for each periodic point in
the cycle $\{a_0, a_1,..., a_{n-1}\}$. Let us carry this construction further 
considering all repelling periodic cycles of $R_0$,  we obtain a countable
binding of semigroups of the form  $\langle A_\lambda, \C\rangle $ associated
to all Poincar\'e functions of $R_0$. Let us call
$\mathcal{A}(R_0)$ this countable binding, so we have a representation 
$\Psi:\mathcal{A}(R_0)\rightarrow \langle R_0, \C\rangle $. Taking instead
of $\langle R_0, \C \rangle$,  the corresponding Poincar\'{e}
functions, we obtain a representation $\tilde{\Psi}$ from $\mathcal{A}(R_0)$ into
$Mer(\C)$.  The image $\phi(\mc{A}(R_0))$ has a
compactification which is related to Lyubich-Minsky laminations discussed in
Section 7 of \cite{LM}.

Let us assume that $R_0$ is hyperbolic of degree  $d$. Since $\langle R_0, \C \rangle$ 
is a subsemigroup of $Rat(\C)$, let us now regard $\Psi$ as a homomorphism from
$\mc{A}(R_0)$ into $Rat(\C)$. Let $\mc{X}(R_0)$ be the space  of representations
from $\mc{A}(R_0)$ into $Rat(\C)$, whose image is of the form $\langle R, \C
\rangle$ for some $R$ of degree $d$. In other, words we are considering all
graduated representations that arise by
deformations of the semigroup $\langle R_0,\C\rangle$. Let us define the map
$P:\mc{X}(R_0)\rightarrow Rat_d(\C)$, such that for every $\Phi\in \mc{X}(R_0)$,
let $P(\Phi)=R$ where $R$ is the non constant rational map generating
$P(\Phi)$.

Let $Par_d(\C)$ be the set of all rational maps, of degree $d$, that admit a
parabolic periodic point. Then $P(\mc{X}(R_0))$, in $Rat_d(\C)$,  is equal to
$Rat_d(\C) \setminus
\overline{Par_d(\C)}$. By a result of Lyubich, see
\cite{L}, the space $Rat_d(\C) \setminus \overline{Par_d(\C)}$ consists of the
union of $J$-stable components in $Rat_d(\C)$. 

In  \cite{Cabre.Maki.Teic}, the authors construct a  dynamical Teichm\"{u}ller
space $T_2(R_0)$, which uniformize the $J$-stable components of $R_0$. It turns
out that the space $T_2(R_0)$  is isomorphic to  $\mathcal{X}(R_0)$. 
 
\section{Correspondences.}

Let $A$ and $B$ be two sets, let $G$ be a subset of $A\times B$. A
\textit{correspondence} is a triple $(G,A,B)$. If $(a,b)\in G$ we say that $b$
corresponds to $a$ under $G$.  The notion of correspondences
generalizes, in a way,  the notion of functions. Indeed, for every map $f:X\rightarrow Y$,
the graph of $f$ induces a correspondence in $X\times Y$. Borrowing notation
from Function Theory, we define the set $$Im(G)=\{b\in B:\exists a\in A
\textnormal{ such that } (a,b)\in G\}$$ is called the \textit{image} of $G$, analogously
the \textit{domain} of $G$ is defined by $$Dom(G)=\{a \in A:\exists b \in B
\textnormal{ such that } (a,b)\in G\}.$$ For every
$b\in Im(G)$ we call $G^{-1}(b)=\{a\in A: (a,b)\in G\}$ the preimage of $b$
under $G$. Similarly, the image of an element $a\in A$ is the set
$G(a)=\{b\in B: (a,b)\in G\}.$  
Given a set $G\subset A\times B$, the set $G^{-1}=\{(b,a)\in B\times A: (a,b)\in
G\}$ is called the inverse of $G$. Let $G_1\subset A\times B$ and $G_2\subset
B\times C$ be two correspondences, the composition $G_1\circ G_2$ of $G_1$ and
$G_2$ is the correspondence induced by the set
$$G_2\circ G_1=\{(a,c)\in A\times C: \exists b\in B \textnormal{ such that }
(a,b)\in G_1 \textnormal{ and } (b,c)\in G_2 \}.$$ 
Let $X$ be a set, a correspondence $K$ in $X$ is a correspondence of the form
$(K,X,X)$, additionally we require that  $Dom(K)=X$.  A correspondence $K$ in
$X$ is called \textit{surjective} if $Im(K)=X$ and, \textit{finite} if every
image is a finite set. In particular, constant maps are finite correspondences.

If $G$ is a finite correspondence, the degree of the image of $G$
is the maximum of the cardinalities of its images. 

\subsection{Schreier's Lemma for correspondences.}

With  composition, the set of correspondences $Corr(X)$ in
$X$ is a semigroup. Since functions are special cases of
correspondences, the semigroup of correspondences 
of $X$ contains $Map(X)$. The proof of the following lemma 
is immediate by contradiction.

\begin{lemma}\label{maps.corr} Let $K_1$ and $K_2$ be two correspondences in
$Corr(X)$ such that $g=K_1\circ K_2$ is a map and $K_2$ is surjective, then
$K_1$ is a map. \end{lemma}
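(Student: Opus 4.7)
The plan is to argue by contradiction, as the authors suggest. Suppose that $K_1$ is not a map. Since $K_1$ is a correspondence in $X$, we have $Dom(K_1)=X$, so the failure of being a map means that there exists some $y\in X$ whose image set $K_1(y)$ contains at least two distinct elements $z_1\neq z_2$. In other words, $(y,z_1)$ and $(y,z_2)$ both lie in $K_1$.

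Next I would invoke surjectivity of $K_2$. Since $Im(K_2)=X$, the point $y$ is in the image of $K_2$, so there exists $x\in X$ with $(x,y)\in K_2$. Now I unpack the definition of composition: according to the paper, $(a,c)\in K_1\circ K_2$ means that there exists $b\in X$ with $(a,b)\in K_2$ and $(b,c)\in K_1$. Taking $a=x$, $b=y$, and $c=z_i$ for $i=1,2$, I conclude that both $(x,z_1)$ and $(x,z_2)$ lie in $g=K_1\circ K_2$.

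This contradicts the hypothesis that $g$ is a map, since $g(x)$ would then contain the two distinct elements $z_1$ and $z_2$. Hence $K_1$ must in fact be a map, completing the proof. There is no real obstacle here; the only point requiring care is keeping the composition convention straight (the paper's $K_1\circ K_2$ applies $K_2$ first, then $K_1$), and making sure that surjectivity of $K_2$, rather than of $K_1$, is what lets us realize any witness $y$ for nondeterminism of $K_1$ as an intermediate value in a composition.
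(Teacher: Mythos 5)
Your proof is correct and is exactly the argument the paper has in mind: the paper merely asserts the lemma is ``immediate by contradiction,'' and you supply that contradiction properly, using $Dom(K_1)=X$ to reduce failure of being a map to a point with two distinct images, surjectivity of $K_2$ to realize that point as an intermediate value, and the paper's composition convention to produce two images of a single point under $g$. Nothing is missing.
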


We will start by generalizing Schreier's  lemma restricted to
correspondences generated by maps. Let us start with some definitions,

\begin{definition} A correspondence $K$ in a set $X$ is called a
\textnormal{block} if $K$ has the form $R_1\circ R_2^{-1}$,
where $R_1$ and $R_2$ belong to $Map(X)$ and $R_2$ is surjective.
\end{definition}

We denote by $\mathcal{BL}(X)$ the subsemigroup of $Corr(X)$, 
generated by all block correspondences.  

\begin{theorem}[Schreier Lemma for blocks]\label{Sch.L.Bl} Let
$\phi:\mc{BL}(X)\rightarrow \mc{BL}(Y)$ be an homomorphism then, there exist
$f\in Map(X,Y)$ such that for every $K\in \mc{BL}(X)$ we have
$\phi(K)=f\circ K\circ f^{-1}$.
\end{theorem}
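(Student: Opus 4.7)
I would adapt the proof of the classical Schreier Lemma (Lemma \ref{StWier.lemma}) to the semigroup $\mc{BL}(X)$, with the constant maps playing the same structural role they do in $Map(X)$.

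First I would establish that the constant maps $c_x$ belong to $\mc{BL}(X)$ (since $c_x = c_x \circ Id^{-1}$ is a block, with $Id$ trivially surjective) and that they act as left absorbers: every generating block $R_1\circ R_2^{-1}$ has full domain $X$ because $R_2$ is surjective, so $R_2^{-1}$ is defined on all of $X$, and this property is preserved by composition, so it extends to every $K\in \mc{BL}(X)$. Consequently $(c_x\circ K)(y) = c_x(K(y)) = \{x\}$ for every $y$, giving $c_x\circ K = c_x$, exactly as in $Map(X)$.

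Next I would define $f\colon X\to Y$ by setting $\phi(c_x) = c_{f(x)}$. To justify this, one applies $\phi$ to the identity $c_x\circ K = c_x$ and obtains $\phi(c_x)\circ \phi(K) = \phi(c_x)$, so $\phi(c_x)$ is itself a left absorber on the image of $\phi$. Combined with the factorization $c_x = c_x\circ Id^{-1}$, in which both factors are honest maps rather than genuine multi-valued correspondences, this pins down $\phi(c_x)$ as a singleton constant $c_{f(x)}\in Map(Y)\subseteq \mc{BL}(Y)$.

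With $f$ in hand, the conjugation formula follows by a direct computation mirroring the classical argument: for any $K\in \mc{BL}(X)$ and $x\in X$ we have $K\circ c_x = C_{K(x)}$, the constant correspondence with image $K(x)\subseteq X$. Applying $\phi$ and using the preceding step (which shows $\phi$ acts on constant correspondences via $f$ on subsets), we obtain $\phi(K)\circ c_{f(x)} = C_{f(K(x))}$, which pointwise on $Y$ reads $\phi(K)(f(x)) = f(K(x))$. This is the equivariance $\phi(K)\circ f = f\circ K$, equivalently $\phi(K) = f\circ K \circ f^{-1}$ as correspondences in $\mc{BL}(Y)$.

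The principal obstacle is the middle step: verifying that $\phi(c_x)$ is a \emph{singleton} constant rather than a wider constant correspondence $C_A$ with $|A|>1$ (which, for sufficiently large $X$, can itself lie in $\mc{BL}(X)$ as the block $R_1\circ R_2^{-1}$ for suitable $R_1,R_2$). This requires a purely algebraic characterization of singleton constants inside the block semigroup that distinguishes them from these wider constants, and it is precisely the restriction to blocks --- spans of genuine maps, rather than arbitrary correspondences --- that should make such a characterization available.
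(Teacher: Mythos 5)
Your overall strategy---transporting the classical Schreier argument to $\mc{BL}(X)$ via the constants---is not the paper's, and it founders on exactly the obstacle you flag at the end, which you do not resolve. Over an infinite set (the case of interest, e.g.\ $X=\C$) the wide constants $C_A$ with $|A|\ge 2$ genuinely occur in $\mc{BL}(X)$ (take $R_2$ two-to-one and surjective, and $R_1$ separating the fibers of $R_2$; then $R_1\circ R_2^{-1}=C_{\{0,1\}}$), and they are algebraically indistinguishable from the singleton constants by the properties you invoke: every $C_A$ is an idempotent right zero ($C_A\circ K=C_A$ for all $K$ of full domain), and the singleton constants do \emph{not} form a left ideal, since $K\circ c_x=C_{K(x)}$ is wide whenever the block $K$ is genuinely multivalued at $x$. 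Your proposed justification---that the factorization $c_x=c_x\circ Id^{-1}$ into "honest maps" pins down $\phi(c_x)$ as a singleton---is circular: knowing $c_x$ factors through maps says nothing about $\phi(c_x)$ unless you already know that $\phi$ sends maps to maps, which is precisely what has not been established. The same gap reappears in your final computation: from $\phi(K)\circ \phi(c_x)=\phi(C_{K(x)})$ you can only read off the equivariance $\phi(K)(f(x))=f(K(x))$ if you also know that $\phi$ sends the \emph{wide} constant $C_{K(x)}$ to $C_{f(K(x))}$, and your middle step, even if repaired, only determines $\phi$ on singleton constants.

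The paper's proof avoids the constants entirely at this stage. It first characterizes $Id$ algebraically ($Id\circ Id=Id$ and $Id\circ C=C\circ Id=C$ for every $C$), so that $\phi(Id)=Id$; then for a map $R$ the relation $R\circ R^{-1}=Id$ together with Lemma \ref{maps.corr} forces $\phi(R)$ to be a map. Thus $\phi$ restricts to a semigroup homomorphism $Map(X)\rightarrow Map(Y)$, where the classical Lemma \ref{StWier.lemma} applies and produces $f$ with $\phi(R)=f\circ R\circ f^{-1}$ for all maps; finally $\phi(R^{-1})=f\circ R^{-1}\circ f^{-1}$ is extracted from the same relation $R\circ R^{-1}=Id$, and multiplicativity extends the formula to all of $\mc{BL}(X)$. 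To salvage your route you would have to supply the missing homomorphism-invariant characterization of singleton constants inside $\mc{BL}(X)$ and, separately, control $\phi$ on the wide constants; the paper's detour through $Map(X)$ (where the singleton constants really are the ideal of constants) accomplishes both at once.
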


\begin{proof} The identity $Id$ is characterized among $Corr(X)$ by the 
properties $Id\circ Id=Id$ and that for every $C\in Corr(X)$ we have 
$Id\circ C = C\circ Id=C$. Since this properties are preserved by homomorphisms
we have $\Phi(Id)=Id$. Let $R\in Corr(X)$ be any map, then $R\circ R^{- 1}=Id$.
But then $\Phi(R\circ R^{- 1})=\Phi(R)\circ \phi(R^{-1})=Id$ is a map, by
Lemma \ref{maps.corr} then $\Phi(R)$ is a map. Hence $\Phi$ sends maps into
maps, so $\phi$ restricted to $Map(X)$ is a homomorphism of semigroups. By
Lemma \ref{StWier.lemma}, there exist $f\in Map(X,Y)$ such that, for every
map $R$, $\Phi(R) \circ f=f\circ R$. Then $\Phi(R)=f\circ R\circ f^{-1}$ for
all maps $R$.

Since blocks generate $\mc{BL}(X)$, it is enough to check that the theorem holds
for every correspondence of the form $K=R_1^{-1}$, where
$R_1$ is a map. Since $R_1\circ R_1^{-1}=Id$ we have $$\Phi(R_1\circ
R_1^{-1})=\Phi(R_1)\circ \Phi(R_1^{-1})=Id$$ on the other hand,
$$\Phi(R_1)=f\circ R_1^{-1}$$ then $$f\circ R_1 \circ
f^{-1}\circ\Phi(R_1^{-1})=Id$$ it follows that $$\Phi(R_1^{-1})=f\circ
R_1 ^{-1}\circ f^{-1}$$ and then for every block $K$, $\Phi(K)=f\circ K\circ
f^{-1} $ as we wanted to prove.
\end{proof}

We now include in the discussion the constant maps in $Corr(X)$, these are no
longer an ideal, but we can consider the unique \textit{minimal left ideal} 
$\mathcal{I}$ in $Corr(X)$, which is  generated by all constant
maps. The semigroup of correspondences acts on $\mathcal{I}$. That
is, there is a map $\alpha:Corr(X)\rightarrow Map(\mc{I})$ that sends every
correspondence $K\in Corr(X)$ to the left translation by $K$ in $Map(\mc{I})$.

\begin{lemma}\label{alpha.lemma}
 The map $\alpha:Corr(X)\rightarrow Map(\mc{I})$ is a one-to-one map. Moreover, 
for every $c\in \mc{I}$, we have $\alpha(c)=c$.
\end{lemma}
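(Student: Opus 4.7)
The plan is to first pin down the structure of the minimal left ideal $\mc{I}$, and then both claims become immediate consequences of a single identity.

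First I would identify $\mc{I}$ explicitly. A constant in $Corr(X)$ is a correspondence of the form $\bar{\{x\}}$ with $\bar{\{x\}}(y) = \{x\}$ for every $y \in X$. For any $K \in Corr(X)$ and any constant $\bar{\{x\}}$, the composition satisfies $(K \circ \bar{\{x\}})(y) = K(x)$ for all $y \in X$, so $K \circ \bar{\{x\}} = \overline{K(x)}$, where $\bar{S}$ denotes the ``set-valued constant correspondence'' with $\bar{S}(y) = S$ for all $y \in X$. Since every non-empty subset $S \subseteq X$ arises as $K(x)$ for a suitable correspondence (take $K = \bar{S}$ itself), the minimal left ideal generated by all constants is exactly $\mc{I} = \{\bar{S} : \emptyset \neq S \subseteq X\}$. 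One checks that this set is indeed a left ideal via $L \circ \bar{S} = \overline{L(S)}$.

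Next, I would prove the second assertion. Fix $c = \bar{S} \in \mc{I}$. For any $\bar{T} \in \mc{I}$, using the formula $L \circ \bar{S} = \overline{L(S)}$ with $L = \bar{S}$ and the fact that $\bar{S}(T) = \bigcup_{t \in T} S = S$, we get
\[
\alpha(c)(\bar{T}) \;=\; \bar{S} \circ \bar{T} \;=\; \bar{S} \;=\; c.
\]
Thus $\alpha(c)$ is the constant map on $\mc{I}$ with value $c$, which under the canonical inclusion $\mc{I} \hookrightarrow Map(\mc{I})$ is simply $c$.

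Finally, I would prove injectivity. Suppose $\alpha(K) = \alpha(L)$. Then for every $\bar{S} \in \mc{I}$ we have $K \circ \bar{S} = L \circ \bar{S}$, which by the key identity reads $\overline{K(S)} = \overline{L(S)}$, i.e.\ $K(S) = L(S)$. Specialising to $S = \{x\}$ for each $x \in X$, which is permissible because $\bar{\{x\}} \in \mc{I}$, gives $K(x) = L(x)$ for every $x \in X$, and since $Dom(K) = Dom(L) = X$, this forces $K = L$.

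The only real content is the explicit description of $\mc{I}$; once one sees that $\mc{I}$ consists precisely of the set-valued constant correspondences $\bar{S}$ and observes that $K \circ \bar{S} = \overline{K(S)}$, both parts of the lemma follow mechanically. There is no serious obstacle beyond being careful about the notational distinction between a constant correspondence $c \in \mc{I} \subset Corr(X)$ and the constant function with value $c$ inside $Map(\mc{I})$.
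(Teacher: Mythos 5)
Your proof is correct and follows essentially the same route as the paper's: injectivity is obtained by evaluating $\alpha(K)$ on the singleton constants $\overline{\{x\}}$ and using that a correspondence with full domain is determined by its images, and the identity $\alpha(c)=c$ comes from $c\circ K'=c$ for $c\in\mc{I}$. The only difference is that you make explicit the description of $\mc{I}$ as the set-valued constants $\bar{S}$ and the identity $K\circ\bar{S}=\overline{K(S)}$, which the paper leaves implicit; this is a harmless (and clarifying) elaboration, not a different argument.
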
 
\begin{proof}
Suppose that $K_1$ and $K_2$ are correspondences in $Corr(X)$ such that
$\alpha(K_1)=\alpha(K_2)$. In particular, for every constant $c\in \mc{I}$, we
have $K_1\circ c=K_2 \circ c$. However, a correspondence is characterized by the set
of images,  then $K_1=K_2$. The second part of the Lemma follows from the
equation $c\circ K= c$ for all $c\in \mc{I}$.  
\end{proof}

Now we are set to prove:

\begin{theorem}\label{Thm.Sch.Corr}[Schreier's Lemma for correspondences] Let
 $$\Phi:Corr(X)\rightarrow Corr(Y)$$ be a homomorphism of semigroups. Then,
there is a map $f\in Map(X,Y)$, such that, for every $K\in Corr(X)$ we
have
$\Phi(K)=f\circ K \circ f^{-1}$. 
 \end{theorem}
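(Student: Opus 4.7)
The plan is to follow the template of Lemma \ref{StWier.lemma} and Theorem \ref{Sch.L.Bl}, using the minimal left ideal $\mc{I}$ of $Corr(X)$ in place of the ideal of constants, and transferring $\Phi$ to a homomorphism between subsemigroups of $Map(\mc{I})$ via the embedding $\alpha$ of Lemma \ref{alpha.lemma}, so that Schreier's Lemma for maps can be applied.

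First, I would verify that $\Phi$ preserves the minimal left ideal, i.e., $\Phi(\mc{I}_X)\subseteq\mc{I}_Y$. The key is the algebraic characterization $\mc{I}=\{K\in Corr(X): K\circ L=K\text{ for every }L\in Corr(X)\}$, which identifies $\mc{I}$ with the constant-image correspondences; this property is preserved by the homomorphism at the level of the image of $\Phi$, and can then be promoted to an actual inclusion into $\mc{I}_Y$ through $\alpha_Y$.

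Next, I would transport $\Phi$ through the embeddings $\alpha_X$ and $\alpha_Y$ to a homomorphism $\tilde\Phi\colon\alpha_X(Corr(X))\to\alpha_Y(Corr(Y))$ between subsemigroups of $Map(\mc{I}_X)$ and $Map(\mc{I}_Y)$. Because $\alpha(c)=c$ for $c\in\mc{I}$ by Lemma \ref{alpha.lemma}, both subsemigroups meet the corresponding ideals of constants in all of $\mc{I}_X$ and $\mc{I}_Y$, so Corollary \ref{Sch.coro} produces a map $g\colon\mc{I}_X\to\mc{I}_Y$, necessarily equal to $\Phi|_{\mc{I}_X}$, satisfying $\Phi(K)\circ g(c)=g(K\circ c)$ for every $K\in Corr(X)$ and $c\in\mc{I}_X$. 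Restricting $g$ to the singleton constants $X\hookrightarrow\mc{I}_X$ then yields the desired map $f\colon X\to Y$, and evaluating the commutativity relation at a singleton $c$ with image $\{x\}$ (whose image under left-translation by $K$ is the constant-image correspondence with image $K(x)$) unfolds, under preservation of singleton structure, to the identity $\Phi(K)(f(x))=f(K(x))$, equivalently $\Phi(K)=f\circ K\circ f^{-1}$.

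The main obstacle is showing that $g$ sends singleton constants to singleton constants: within the left-zero semigroup structure on $\mc{I}_X$, the atoms (singletons) are not obviously distinguishable from multi-element constant-image correspondences by any simple composition identity in $\mc{I}_X$ itself. My approach would exploit the fact that $\alpha(Corr(X))\subset Map(\mc{I}_X)$ consists precisely of the union-preserving transformations of $\mc{P}(X)\setminus\{\emptyset\}$ (immediate from the formula $\alpha(K)(c)=K\circ c$, which sends the constant with image $T$ to the constant with image $K(T)$), so that the Boolean-algebra structure of $\mc{P}(X)\setminus\{\emptyset\}$ is encoded in the semigroup $Corr(X)$ through its action on $\mc{I}_X$; atoms are then algebraically recoverable from this encoded structure, and their preservation by $\Phi$ follows.
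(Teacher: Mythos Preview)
Your overall architecture—push $\Phi$ through the embeddings $\alpha_X,\alpha_Y$ into $Map(\mc{I}_X)$ and $Map(\mc{I}_Y)$ and then invoke Corollary~\ref{Sch.coro}—matches the second half of the paper's argument. The essential difference is the first half: the paper does \emph{not} try to extract the map $f\colon X\to Y$ from the minimal-ideal picture alone. Instead it begins by restricting $\Phi$ to $\mc{BL}(X)$ and applying Theorem~\ref{Sch.L.Bl} (Schreier for blocks), which already produces $f\in Map(X,Y)$ with $\Phi(R)=f\circ R\circ f^{-1}$ for every map $R$, in particular for every singleton constant. Only then does the paper run the $\alpha$-argument to obtain $F\colon\mc{I}_X\to\mc{I}_Y$, and the two pieces are tied together by observing that $F$ and $f$ agree on the singleton constants. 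The authors flag this explicitly right after the proof: the block theorem is what supplies the existence of $f$.

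This is precisely the device that dissolves your ``main obstacle''. Because singleton constants lie in $Map(X)\subset\mc{BL}(X)$, the block lemma forces $\Phi$ to carry them to singleton constants in $Y$, with no need to recover atoms from the action on $\mc{I}_X$. Your alternative—arguing that the Boolean structure of $\mc{P}(X)\setminus\{\emptyset\}$ is encoded in $\alpha(Corr(X))$ and hence atoms are ``algebraically recoverable''—is where the proposal becomes genuinely incomplete. Knowing that $\alpha(Corr(X))$ consists of all union-preserving self-maps does not, by itself, force the conjugating map $g$ produced by Corollary~\ref{Sch.coro} to preserve unions or atoms: the intertwining relation $\alpha_Y(\Phi(K))\circ g=g\circ\alpha_X(K)$ constrains $g$ only along orbits, and I do not see a semigroup-theoretic characterization of the singletons inside $\mc{I}_X$ (as a set with a $Corr(X)$-action by left translation) that a homomorphism must respect. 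Every candidate identity one writes down—stabilizers, orbit surjectivity, separation properties—fails to single out the atoms. A similar issue already appears in your first step: the left-zero identity $c\circ L=c$ only transfers to $\Phi(c)\circ M=\Phi(c)$ for $M$ in the image of $\Phi$, and promoting this to $\Phi(c)\in\mc{I}_Y$ ``through $\alpha_Y$'' is circular, since $\alpha_Y(\Phi(c))$ is a constant map on $\mc{I}_Y$ exactly when $\Phi(c)\in\mc{I}_Y$.

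In short, the $\alpha$-reduction is shared, but the paper's use of Theorem~\ref{Sch.L.Bl} as a preliminary step is the missing idea in your outline; it is what guarantees singleton preservation and lets the rest of the argument go through.
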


\begin{proof}
 By the same argument in the proof of Theorem \ref{Sch.L.Bl}, the map $\Phi$
sends maps to maps. Moreover, the restriction of $\Phi$ to $\mathcal{BL}(X)$, is
an homomorphism from $\mathcal{BL}(X)$ to $\mathcal{BL}(Y)$, by Theorem
\ref{Sch.L.Bl} there is $f\in Map(X,Y)$ such that 
for every $K\in \mc{BL}(X)$, we have that $\phi(K)=f\circ K\circ f^{-1}$.

Let $\mc{I}$ and $\mc{J}$ denote the minimal ideals in $Corr(X)$ and
$Corr(Y)$, respectively. Let us consider the maps $\alpha_X:Corr(X)\rightarrow
Map(\mc{I})$, and $ \alpha_Y:Corr(Y)\rightarrow
Map(\mc{J})$ as in Lemma \ref{alpha.lemma}, and define $S_X=\alpha(Corr(X))$
and $S_Y=\alpha(Corr(Y)).$ By Lemma \ref{alpha.lemma}, the maps $\alpha_X$ and
$\alpha_Y$ are bijections to their images.
Moreover, $\alpha_X$ and $\alpha_Y$ send constants to constants. Hence the map 
$\alpha_Y\circ \phi \circ\alpha_X^{-1}$ is a homomorphism between the semigroups
$G_X$ and $G_Y$, sending constants to constants. By Corollary \ref{Sch.coro},
there exist 
$F\in Map(\mc{I},\mc{J})$ such that for every $g\in G$, we have $$\alpha_Y\circ
\phi  \circ\alpha_X^{-1}(g)=F\circ g\circ F^{-1} .$$
Since $\alpha$, restricted to minimal ideal is the identity, then for
every $c$ in $\mc{I}$ and every correspondence $K\in Corr(X)$
we have $F(c)=f(c)$,  also
$$\alpha_Y(K)\circ c=K\circ c,$$ and  $$ \alpha_X^{-1}(c)=c$$
evaluating in $c$ the equation above, we get 
$$(\alpha_Y\circ \phi(K)\circ \alpha_X^{-1})\circ c=
\phi(K)\circ c$$$$= F\circ K \circ F^{-1}\circ c.$$ But then
$\phi(K)=f\circ K\circ f^{-1}$.

\end{proof}

Note that in the proof of Theorem \ref{Thm.Sch.Corr}, we need the theorem on
block correspondences to get the existence of the map $f$.  Once we have
Schreier's lemma for the whole semigroup of correspondences, we can generalize
it for subsemigroups of correspondences, as long as they contain the minimal
ideal of constants.

\begin{corollary}\label{Schreier.Corr.corol}
 Let $S_1$ and $S_2$ be subsemigroups of  $Corr(X)$ and $Corr(Y)$,
respectively, such that $X_1=S_1\cap X$ and $Y_1=S_2\cap Y$ are both non
empty. If $\phi:S_1\rightarrow S_2$ is a homomorphism of semigroups, then there
is $f:X_1\rightarrow Y_1$, such that for all $K\in S_1$, $\phi(K)=f\circ
K\circ f^{-1}$. Moreover,
\begin{itemize}
 \item the homomorphism $\phi$ is injective, or surjective, if and only
if, the map $f$ is injective or surjective. In particular, $\phi$ is an
isomorphism if, and only if, $f$ is a bijection.

\item When $S_1$ and $S_2$ are
topological semigroups, then $\Phi$ is continuous if, and only if, $f$ is
continuous.

\end{itemize}

\end{corollary}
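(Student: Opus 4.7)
The plan is to mirror the proof of Theorem \ref{Thm.Sch.Corr} in the restricted setting, exactly as Corollary \ref{Sch.coro} is extracted from Lemma \ref{StWier.lemma}. The hypotheses $X_1\neq\emptyset$ and $Y_1\neq\emptyset$ provide the minimal supply of constants needed to run the argument.

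First I would verify that $\phi$ carries $X_1$ into $Y_1$. A constant $c\in Corr(X)$ is characterized algebraically by the identity $c\circ K=c$, which holds in particular for every $K\in S_1$. Applying $\phi$ transports this equation into $S_2$, and combined with the action on the minimal left ideal $\mc{J}\subset Corr(Y)$ from Lemma \ref{alpha.lemma}, it forces $\phi(c)$ to be a constant in $Corr(Y)$, hence to lie in $S_2\cap Y=Y_1$. Set $f:=\phi|_{X_1}:X_1\to Y_1$.

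Next I would derive the conjugation formula $\phi(K)=f\circ K\circ f^{-1}$ for every $K\in S_1$. Following Theorem \ref{Thm.Sch.Corr}, one passes to the left-translation maps $\alpha_X:S_1\to Map(X_1)$ and $\alpha_Y:S_2\to Map(Y_1)$; the composite $\alpha_Y\circ\phi\circ\alpha_X^{-1}$ is a homomorphism between semigroups of maps which preserves constants, so Corollary \ref{Sch.coro} supplies a conjugating map $F:X_1\to Y_1$. Evaluating at an arbitrary $c\in X_1$ and using that $\alpha_X,\alpha_Y$ restrict to the identity on constants identifies $F$ with $f$ and yields the stated formula on all of $S_1$.

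The two ``moreover'' clauses are then automatic. Injectivity or surjectivity of $f$ propagates to $\phi$ via the conjugation formula, and the converse holds because $f=\phi|_{X_1}$, so $\phi$ is an isomorphism iff $f$ is a bijection. For continuity, the map $K\mapsto f\circ K\circ f^{-1}$ is continuous whenever $f$ is, by continuity of composition in the topological correspondence semigroup, and again the converse comes from restricting to constants. The principal obstacle is the first step: elements of $X_1$ only satisfy the defining constant equation against $S_1$ rather than against all of $Corr(X)$, so one genuinely needs the minimal-ideal device of Lemma \ref{alpha.lemma} to promote the restricted equation inside $S_2$ to a true constant property in the ambient $Corr(Y)$, as indicated in the remark preceding the corollary.
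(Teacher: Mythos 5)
Your proposal matches the paper's treatment: the paper offers no separate proof of this corollary, only the preceding remark that the argument of Theorem \ref{Thm.Sch.Corr} goes through for subsemigroups containing constants, and your reconstruction is precisely that restriction (constants are preserved, the translation maps $\alpha_X,\alpha_Y$ reduce matters to Corollary \ref{Sch.coro}, and the two \emph{moreover} clauses follow from $f=\phi|_{X_1}$ together with the conjugation formula). The one step you add beyond the paper --- invoking Lemma \ref{alpha.lemma} to conclude that the identity $\phi(c)\circ\phi(K)=\phi(c)$ for $K\in S_1$ already forces $\phi(c)$ to be a genuine constant of $Corr(Y)$ --- is asserted rather than proved (that identity only makes $\phi(c)$ a left zero of $\phi(S_1)$), but the paper leaves exactly the same point implicit, so your argument is faithful to its intended proof.
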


\subsection{Holomorphic correspondences.}

A correspondence $K$ in $\C$ is \textit{holomorphic} if, as a set of $\C\times
\C$, 
$K$ can be decomposed as a countable union of analytic varieties, see
McMullen's book \cite{Mc2}. However recall that, in our setting, we
require that $Dom(K)=\C$. Moreover, we assume  that the preimage of
every point admits an analytic extension to the whole Riemann sphere, with the
exception
of finitely many points. Let us denote by $HCorr(\C)$ the semigroup of holomorphic
correspondences, which includes the semigroup of entire maps and constants. 
We denote by $FHCorr(\bar{\C})$, the semigroup of finite correspondences on the Riemann sphere.
By definition $FHCorr(\bar{\C})$ contains the  semigroup of rational maps $Rat(\C)$ together
with all constant maps. Hence, there exist a minimal left ideal of finite
holomorphic correspondences. Since rational maps are onto the Riemann sphere,
if $R_1$ and $R_2$ are rational maps, the block $R^{-1}_1\circ R_2$  belongs to $FHCorr(\C)$.

Let $K\in FHCorr(\bar{\C})$, a holomorphic correspondence with degree $d$. That 
is there is $z$ such that $K(z)$ consists of $d$ points. Let $S_1,S_2, ...,
S_{d}$ denote all the symmetric polynomials with $d$ variables. For every $i$,
$S_i(K)$ induces a holomorphic map from $\bar{\C}$ to $\bar{\C}$, it follows
that $S_i(K)$ is a rational map.  Moreover, for every $z$ the image $K(z)$
are the roots of the polynomial $$S_1(K(z))+S_2(K(z))Z+...+S_d(K(z)) Z^{d-1}
+Z^d.$$
Reciprocally any polynomial in $Z$, whose coefficients are rational maps in $z$, defines a
finite holomorphic correspondence in $\bar{\C}$. From this discussion,  we have
the following known fact.

\begin{proposition}\label{finite.corr.pol}
The space $FHCorr(\bar{\C})$ is equivalent to the space of monic polynomials
with coefficients in $Rat(\C)$.
\end{proposition}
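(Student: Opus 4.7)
The plan is to exhibit mutually inverse maps between $FHCorr(\bar{\C})$ and the set $\mathcal{M}$ of monic polynomials in an indeterminate $Z$ with coefficients in $Rat(\C)$, thereby realizing the claimed equivalence.

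First I would build a map $\Psi:FHCorr(\bar{\C})\to\mathcal{M}$ as follows. Given $K\in FHCorr(\bar{\C})$ of degree $d$, for each $z\in\bar{\C}$ the image $K(z)$ is a set of at most $d$ points in $\bar{\C}$, counted with the natural multiplicities coming from the local analytic structure of $K$. Applying the elementary symmetric polynomials $S_1,\dots,S_d$ to these values produces functions $z\mapsto S_i(K(z))$ from $\bar{\C}$ to $\bar{\C}$, and I would set
\[
\Psi(K)(Z)=Z^d+\sum_{i=1}^d (-1)^i S_i(K(z))\, Z^{d-i}.
\]
The main content is to check that each coefficient is a rational map. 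Locally, $K$ is cut out by analytic equations, and although the $d$ branches of $K$ over $z$ may be permuted by monodromy (so that individual branches are not globally single-valued), the symmetric functions of the branches are single-valued and locally holomorphic. By the hypothesis that the preimage of a generic point extends analytically to all of $\bar{\C}$ save finitely many points, the functions $S_i(K(\cdot))$ extend to meromorphic maps $\bar{\C}\to\bar{\C}$, hence to rational maps; this justifies $\Psi(K)\in\mathcal{M}$.

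In the reverse direction I would define $\Psi':\mathcal{M}\to FHCorr(\bar{\C})$ by sending a monic polynomial $P(Z,z)=Z^d+a_1(z)Z^{d-1}+\cdots+a_d(z)$, with $a_i\in Rat(\C)$, to the correspondence
\[
K_P=\{(z,w)\in\bar{\C}\times\bar{\C}:P(w,z)=0\}.
\]
Since $P$ is monic of fixed degree in $Z$ with rational coefficients, $K_P$ is cut out by a single algebraic equation, so it is an analytic variety of $\bar{\C}\times\bar{\C}$; thus $K_P$ is a holomorphic correspondence in the sense of Section 4. For each $z$ away from the poles of the $a_i$, $K_P(z)$ is the (finite) zero set of the degree-$d$ polynomial $P(\cdot,z)$, so $K_P$ is finite and has $Dom(K_P)=\bar{\C}$.

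Finally I would check that $\Psi$ and $\Psi'$ are mutual inverses. The composition $\Psi\circ\Psi'$ is the identity because the elementary symmetric polynomials of the roots of a monic polynomial recover its coefficients (up to the sign convention fixed above). The composition $\Psi'\circ\Psi$ is the identity because two finite holomorphic correspondences with the same image sets $K(z)$ at every $z$ coincide as subsets of $\bar{\C}\times\bar{\C}$, and the roots of $\Psi(K)(\cdot,z)$ are by construction exactly the points of $K(z)$ with the right multiplicities. The main obstacle is the careful verification at the exceptional points where branches of $K$ collide or where coefficients have poles: one must invoke Puiseux-type expansions to confirm that the symmetric functions genuinely extend as rational maps across such points. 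Once this analytic point is settled, the rest of the argument is purely formal.
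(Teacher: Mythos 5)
Your proposal is correct and follows essentially the same route as the paper: the paper likewise passes to the elementary symmetric functions $S_i(K(z))$, observes that they are single-valued and hence rational, recovers $K(z)$ as the root set of the resulting monic polynomial, and notes the converse construction. Your version merely adds the (welcome) explicit verification of single-valuedness under monodromy and the check that the two assignments are mutually inverse, which the paper leaves implicit.
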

 
The proof of Theorem \ref{Thm.Sch.Corr}, can be repeated in the setting of
holomorphic correspondences. In this case, every automorphism of $HCorr(\C)$ or
$FHCorr(\bar{\C})$ is induced by conjugation of some function in $Bij(\C)$ or
$Bij(\bar{\C})$. Nevertheless, the holomorphic structure imposes holomorphic
conditions in such bijections.

\begin{theorem} The following statements are true
\begin{itemize}
 \item Every automorphism of $HCorr(\C)$ is continuous. Moreover,
$$Aut(HCorr(\C))\simeq \textnormal{Aff}(\C).$$ 
\item The action of $Gal(\C)$ extends to an action in $FHCorr(\bar{\C})$. In fact, $$Aut(FHCorr(\bar{\C}))\simeq \langle PSL(2,\C), Gal(\C)\rangle.$$
\end{itemize}

\end{theorem}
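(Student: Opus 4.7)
The plan is to apply the Schreier Lemma for correspondences (Theorem \ref{Thm.Sch.Corr}, or more precisely Corollary \ref{Schreier.Corr.corol}) to each bullet, reducing the identification of the automorphism group to the automorphism analyses already carried out for $Ent(\C)$ and $Rat(\bar\C)$.

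For the first bullet, I would take $\Phi\in\textnormal{Aut}(HCorr(\C))$ and invoke Corollary \ref{Schreier.Corr.corol} to obtain a bijection $f:\C\to\C$ with $\Phi(K)=f\circ K\circ f^{-1}$ for every $K\in HCorr(\C)$. The next step is to argue that $\Phi$ restricts to an automorphism of $Ent(\C)$: repeating the argument of Theorem \ref{Sch.L.Bl} (using $g\circ g^{-1}=Id$ together with Lemma \ref{maps.corr}) forces $\Phi$ to send maps to maps, and the maps lying in $HCorr(\C)$ are precisely the entire functions. Hinkkanen's Theorem \ref{Hink} then makes $f$ affine. The reverse inclusion $\textnormal{Aff}(\C)\subset\textnormal{Aut}(HCorr(\C))$ is immediate, since conjugation by an affine bijection preserves the holomorphicity of any correspondence. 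Continuity of every automorphism then follows because $\textnormal{Aff}(\C)$ acts continuously.

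For the second bullet, the first task is to exhibit the claimed action of $Gal(\C)$ on $FHCorr(\bar\C)$. I would use Proposition \ref{finite.corr.pol}, identifying every $K\in FHCorr(\bar\C)$ with a monic polynomial $Z^d+\sum_i S_i(K)(z)Z^{d-i}$ whose coefficients are rational maps in $z$; a Galois automorphism acts on these coefficients and on the base variable $z$ to yield a new element of $FHCorr(\bar\C)$. With this action in hand, Corollary \ref{Schreier.Corr.corol} applied to an arbitrary $\Phi\in\textnormal{Aut}(FHCorr(\bar\C))$ gives a bijection $f$ of $\bar\C$ with $\Phi(K)=f\circ K\circ f^{-1}$. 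Repeating the map-preservation argument restricts $\Phi$ to an automorphism of $Rat(\bar\C)$, and Proposition \ref{prop.aut.rat} forces $f\in\langle Gal(\C),PSL(2,\C)\rangle$. The reverse inclusion follows from the $Gal(\C)$-action just described together with the obvious $PSL(2,\C)$-action.

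The main obstacle I anticipate is verifying that $f$ obtained from the Schreier lemma is genuinely compatible with the holomorphic structure, i.e.\ that the restrictions of $\Phi$ to $Ent(\C)$ and $Rat(\bar\C)$ are actually automorphisms of those subsemigroups rather than merely injections into the ambient correspondence semigroup. For the entire case this reduces to the fact that a holomorphic correspondence which is single-valued must be entire; for the rational case Proposition \ref{finite.corr.pol} characterizes the single-valued elements of $FHCorr(\bar\C)$ as rational. Once these restriction statements are pinned down, Hinkkanen's theorem and Proposition \ref{prop.aut.rat} finish each case.
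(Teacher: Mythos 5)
Your proposal is correct and follows essentially the same route as the paper: restrict the automorphism to the subsemigroup of maps (entire functions in $HCorr(\C)$, rational maps in $FHCorr(\bar{\C})$), apply Schreier's lemma to get a conjugating bijection, and then invoke Hinkkanen's theorem, respectively Proposition \ref{prop.aut.rat}, while the $Gal(\C)$-action on $FHCorr(\bar{\C})$ is obtained from the symmetric-polynomial/monic-polynomial description exactly as in Proposition \ref{finite.corr.pol}. You merely make explicit some steps the paper leaves implicit (the maps-to-maps argument and the identification of single-valued holomorphic correspondences), which does not change the argument.
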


\begin{proof}

The semigroup of maps in $HCorr(\C)$ coincides with the semigroup of entire
maps. The first part of the theorem is a consequence of Corollary
\ref{cor.mer.cont}.
Since the semigroup of maps in $FHCorr(\bar{\C})$ is equal to $Rat(\bar{\C})$.
By restriction, any automorphism of $FHCorr(\bar{\C})$ induces an automorphism
of $Rat(\bar{\C})$. But every automorphism of $Rat(\bar{\C})$ is 
generated by $PSL(2,\C)$ and $Gal(\C)$. Now let us see that, in fact, $Gal(\C)$
also acts on $FHCorr(\bar{\C})$. 
Let $\gamma$ an element in $Gal(\C)$, and let $K\in FHCorr(\bar{\C})$, then 
$\gamma \circ K \circ \gamma^{-1}$ is a finite correspondence in $Corr(\bar{\C})$.
Let $d$ be the maximum cardinality of a fiber of $K$. Remind that $K$ is holomorphic
in the Riemann sphere if, and only if,  there is a symmetric polynomial  $S_d$
in $d$ variables, such that $S_d(K)$ is a rational map in $\bar{\C}$. Since
$\gamma$ acts on symmetric polynomials, there is a symmetric polynomial
$\tilde{S}_d$ such that $$\tilde{S}_d(K)=\gamma \circ S_d(K) \circ
\gamma^{-1}=S_d(\gamma \circ K \circ \gamma^{-1})$$ But the second equality is
the conjugation of a rational map by a Galois map, hence is rational. This
implies that $\gamma \circ K \circ \gamma^{-1}$ is a holomorphic correspondence.
It follows that the group of automorphisms of $FHCorr(\bar{C})$ is isomorphic to the 
group of automorphisms of $Rat(\C)$, which by Proposition \ref{prop.aut.rat} is isomorphic to
$\langle PSL(2,\C), Gal(\C)\rangle$.
 \end{proof}

The central argument for Theorem \ref{Thm.Sandwich} is Schreier's Lemma, with
some modifications we can prove the corresponding theorem for holomorphic
correspondences. 

\begin{theorem}\label{Thm.Sandwich.corr} Let $K_1$ and $K_2$ be two holomorphic
correspondences. Let $$\Phi:Corr_{K_1}(\C)\rightarrow Corr_{K_2}(\C)$$ be an
isomorphism of sandwich semigroups. Then there is $f\in Bij(\C)$, and $B\in
\textnormal{Aff}(\C)$, such that $\Phi(P)=f\circ P \circ  f^{-1}\circ B^{-1}$. 

\end{theorem}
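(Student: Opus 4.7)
My plan is to transplant the proof of Theorem \ref{Thm.Sandwich} into the correspondence setting, using Schreier's Lemma for correspondences (Theorem \ref{Thm.Sch.Corr} and Corollary \ref{Schreier.Corr.corol}) in place of its map-version, and invoking the holomorphic hypothesis on $K_1, K_2$ at the step where one has to prove that the normalizing element $B$ lies inside $\textnormal{Aff}(\C)$. The four main steps will be: restrict $\Phi$ to constants to produce a bijection $f \in Bij(\C)$; derive an intertwining fundamental equation from the sandwich identity; extract $B = \Phi(Id)$ and prove $B \in \textnormal{Aff}(\C)$; normalize suitably to reduce to the case $B = Id$ and recover the conjugation formula.

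For the first two steps, I would argue as in Lemma \ref{alpha.lemma} that $\Phi$ and $\Phi^{-1}$ both preserve the minimal left ideal of set-valued constants; the restriction to single-valued constants then gives a bijection $f : \C \to \C$. Expanding the sandwich condition
$$\Phi(K \ast_{K_1} c) = \Phi(K) \ast_{K_2} \Phi(c)$$
at a single-valued constant $c$, and identifying the constant correspondence $K \circ K_1 \circ c$ with its image $K(K_1(c)) \subset \C$, yields the fundamental relation
\begin{equation}\label{plan.fund.eq}
f(K \circ K_1(c)) = \Phi(K) \circ K_2(f(c))
\end{equation}
as equality of subsets of $\C$, valid for every $K \in Corr(\C)$ and every $c \in \C$.

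Setting $K = Id$ in (\ref{plan.fund.eq}) and writing $B = \Phi(Id)$ yields $f \circ K_1 = B \circ K_2 \circ f$ as correspondences, and the analogous relation coming from $\Phi^{-1}$ forces $B$ to be invertible and single-valued. Once $B \in \textnormal{Aff}(\C)$ is established, one forms the auxiliary sandwich-isomorphism $\phi_B: Corr_{K_2}(\C) \to Corr_{BK_2}(\C)$ given by right composition with $B^{-1}$, and considers $\tilde{\Phi} = \phi_B \circ \Phi$, which satisfies $\tilde{\Phi}(Id) = Id$ while still restricting to $f$ on single-valued constants. Plugging $K = Id$ into the fundamental equation for $\tilde{\Phi}$ gives $K_1 = f^{-1} \circ (BK_2) \circ f$; substituting this back into (\ref{plan.fund.eq}) for general $K$ collapses the relation to $\tilde{\Phi}(K) = f \circ K \circ f^{-1}$. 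Undoing the normalization then recovers the desired $\Phi(K) = f \circ K \circ f^{-1} \circ B^{-1}$.

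The hard step is proving $B = \Phi(Id) \in \textnormal{Aff}(\C)$. In Theorem \ref{Thm.Sandwich} this came for free from the multiplicativity of the degree on $Pol(\C)$, but inside the much richer $Corr(\C)$ no such cheap invariant is at hand. My plan here is to exploit the holomorphic hypothesis on $K_1, K_2$: using Proposition \ref{finite.corr.pol}, the correspondences $K_1, K_2$ are encoded as monic polynomials with rational-map coefficients, and the intertwining identity $f \circ K_1 = B \circ K_2 \circ f$ then forces $B$ to act compatibly on these rational symmetric functions. Combined with the fact, already obtained above, that $B$ is a single-valued bijection, this should promote $B$ to an entire map of $\C$, whence the classification of entire bijections places $B$ in $\textnormal{Aff}(\C)$. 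This is where the real technical work of the proof is expected to lie.
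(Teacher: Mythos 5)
Your overall route is the one the paper itself points to: the paper gives no written proof of Theorem \ref{Thm.Sandwich.corr}, saying only that it follows from the argument for Theorem \ref{Thm.Sandwich} ``with some modifications'' via Schreier's Lemma for correspondences. So there is no detailed proof to compare against, and your outline (restrict to constants, derive the intertwining identity, show $B=\Phi(Id)\in\textnormal{Aff}(\C)$, normalize by $\phi_B$) is exactly the intended skeleton. The question is whether your modifications close the two places where the correspondence setting genuinely differs from the polynomial one, and as written they do not.

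First, the restriction to constants. In $Corr_{K_1}(\C)$ the elements $x$ satisfying $x\ast_{K_1}K=x$ for all $K$ are exactly the \emph{set-valued} constants, so an isomorphism of sandwich semigroups a priori induces only a bijection of the nonempty subsets of $\C$, not of $\C$ itself. You assert that restricting to single-valued constants gives $f\in Bij(\C)$, but you give no algebraic characterization of singletons inside the minimal left ideal that $\Phi$ would have to preserve. In Theorem \ref{Thm.Sch.Corr} the paper sidesteps the analogous issue by first showing that $\Phi$ sends maps to maps, using $R\circ R^{-1}=Id$ together with Lemma \ref{maps.corr}; that device is unavailable here, since in a sandwich semigroup $Id$ is not the neutral element and $R\ast_{K_1}R^{-1}=R\circ K_1\circ R^{-1}\neq Id$. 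Second, the step $B=\Phi(Id)\in\textnormal{Aff}(\C)$, which you rightly single out as the crux, is not actually carried out. The claim that $f\circ K_1=B\circ K_2\circ f$ together with its analogue for $\Phi^{-1}$ ``forces $B$ to be invertible and single-valued'' does not follow: that relation only determines the composite $B\circ K_2$, and many multivalued $B$ are compatible with it. In Theorem \ref{Thm.Sandwich} this step came from the multiplicative character $deg$ on $Pol(\C)$, valued in $\mathbb{N}$, together with the fact that conjugation by a bijection preserves degree; your plan needs an explicit substitute (for instance the generic fiber cardinality of a finite holomorphic correspondence and its multiplicativity under composition, via Proposition \ref{finite.corr.pol}), and in the generality of $Corr_{K_1}(\C)$ --- all correspondences, not just finite holomorphic ones --- no such invariant is available. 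Until these two points are supplied, what you have is a correct outline of the intended strategy rather than a proof.
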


It is not clear whether the Galois group action acts on holomorphic
correspondences. Perhaps there is a generalization to Hinkkanen's argument
in this setting. Now, we can state an  analogous statement to  Theorem
\ref{Gal.thm} for $FHCorr(\bar{\C})$.

\begin{corollary} Let $F$ be an element in $Bij(\bar{\C})$, that fixes $0,1$
and $\infty$. Then $F$ belongs to $Gal(\C)$ if, and only if, $F$  induces an
automorphism of
$FHCorr(\bar{\C})$.
\end{corollary}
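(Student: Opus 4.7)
The plan is to prove both directions using the previous theorem about $Aut(FHCorr(\bar{\C}))$.

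For the sufficiency, if $F \in Gal(\C)$, the preceding theorem established that $Gal(\C)$ acts on $FHCorr(\bar{\C})$ by conjugation: for any finite holomorphic $K$, the correspondence $F \circ K \circ F^{-1}$ is again in $FHCorr(\bar{\C})$, via the symmetric-function argument. So conjugation by $F$ gives a well-defined automorphism.

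For the necessity, suppose the map $\phi_F(K) = F \circ K \circ F^{-1}$ is an automorphism of $FHCorr(\bar{\C})$. By the preceding theorem, $Aut(FHCorr(\bar{\C})) \simeq \langle PSL(2,\C), Gal(\C)\rangle$, hence $\phi_F = \phi_H$ for some $H \in \langle PSL(2,\C), Gal(\C)\rangle$. First I would show $F = H$ by evaluating both conjugations on constants: for every $c \in \bar{\C}$, $\phi_F(c) = F(c)$ and $\phi_H(c) = H(c)$ (since conjugating a constant correspondence by a bijection yields the constant whose value is the image of $c$), so $F = H$.

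Next I would put $F$ into normal form. Using that $Gal(\C)$ acts on $PSL(2,\C)$ by conjugating coefficients (if $G \in Gal(\C)$ and $M(z) = (az+b)/(cz+d)$, then $G \circ M \circ G^{-1}$ is Möbius with Galois-conjugate coefficients), any word in the generators can be rewritten in the form $F = M \circ G$ with $M \in PSL(2,\C)$ and $G \in Gal(\C)$. Since every $G \in Gal(\C)$ fixes $0$ and $1$ (as identities of the field operations) and $\infty$ (by the natural extension to $\bar{\C}$), the hypotheses $F(0)=0$, $F(1)=1$, $F(\infty)=\infty$ force $M(0)=0$, $M(1)=1$, $M(\infty)=\infty$. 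A Möbius transformation fixing three points is the identity, so $M = Id$ and $F = G \in Gal(\C)$.

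The main subtlety is the identification $F = H$: one must be careful that the isomorphism in the preceding theorem is literally implemented by conjugation, so that the $H$ provided abstractly really represents $\phi_F$ as a conjugation by the same element of $Bij(\bar{\C})$. The evaluation on the ideal of constants (as in Schreier's Lemma and Corollary \ref{Schreier.Corr.corol}) resolves this unambiguously, after which the fixed-point reduction is routine.
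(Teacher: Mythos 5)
Your proof is correct and follows the route the paper intends: the corollary is stated without proof as an immediate consequence of the preceding theorem on $Aut(FHCorr(\bar{\C}))$, and your argument --- sufficiency via the symmetric-polynomial action of $Gal(\C)$, necessity via Schreier's Lemma for correspondences to identify $F$ with the conjugating bijection, the normal form $F = M\circ G$ with $M\in PSL(2,\C)$ and $G\in Gal(\C)$, and the three fixed points to force $M=Id$ --- is exactly the natural way to fill in that derivation. The one subtlety you flag (that the abstract isomorphism is implemented by conjugation, pinned down by evaluation on the ideal of constants) is handled correctly, so there is no gap.
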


\bibliographystyle{amsplain}
\bibliography{workbib}

\end{document}